\renewcommand\le\leqslant
\renewcommand\ge\geqslant
\newcommand{\fg}[0]{\mathfrak{g}}
\newcommand{\fs}[0]{\mathfrak{s}}
\newcommand{\kf}{\kappa}
\newcommand{\add}{\dotplus}
\newcommand{\ot}{\otimes}
\DeclareMathOperator{\tOp}{t}
\DeclareMathOperator{\adOp}{ad}
\newcommand{\ad}{\mathord{\adOp}}
\setlist[itemize]{align=parleft,left=0pt..18pt,topsep=5pt,itemsep=2pt}
\setlist[enumerate]{align=parleft,left=0pt..18pt,topsep=5pt, itemsep=2pt}
\newtheorem{thm}{Theorem}[section]
\theoremstyle{plain}
\newtheorem{corollary}[thm]{Corollary}
\newtheorem{lemma}[thm]{Lemma}
\newtheorem{proposition}[thm]{Proposition}
\newtheorem{theorem}[thm]{Theorem}
\newtheorem{maintheoremcounter}{Theorem}
\theoremstyle{plain}
\newtheorem{maincorollary}[maintheoremcounter]{Corollary}
\newtheorem{mainproposition}[maintheoremcounter]{Proposition}
\newtheorem{maintheorem}[maintheoremcounter]{Theorem}
\newtheoremstyle{example_style}
  {5pt} 
  {5pt} 
  {} 
  {} 
  {\itshape} 
  {.} 
  {10pt} 
  {} 
\theoremstyle{example_style}
\newenvironment{example}
  {\pushQED{\qed}\examplex}
  {\popQED\endexamplex}
  \crefname{examplex}{Example}{Examples}
\newenvironment{remark}
  {\pushQED{\qed}\remarkx}
 {\popQED\endremarkx}
\theoremstyle{definition}
\newenvironment{definition}
  {\pushQED{\qed}\definitionx}
 {\popQED\enddefinitionx}
\title{Topological Manin pairs and \((n,s)\)-type series}
\date{\today}
\author{
Raschid Abedin\footnote{Department of Mathematics, ETH Zürich, 8006 Zürich, Schweiz, raschid.abedin@math.ethz.ch.}, \, 
Stepan Maximov\footnote{Department of Mathematical Sciences, 
Chalmers University of Technology  and the University of Gothenburg, 412 96 Gothenburg, Sweden, maximov@chalmers.se.} \, and
Alexander Stolin\footnote{Department of Mathematical Sciences, 
Chalmers University of Technology  and the University of Gothenburg, 412 96 Gothenburg, Sweden, astolin@chalmers.se}}
\begin{document}
\maketitle

\begin{abstract}
Lie subalgebras of
$ L = \mathfrak{g}(\!(x)\!) \times \mathfrak{g}[x]/x^n\mathfrak{g}[x] $,
complementary to the diagonal
embedding $\Delta$ of $ \mathfrak{g}[\![x]\!] $ and Lagrangian
with respect to some particular form,
are in bijection with formal classical 
$r$-matrices and topological
Lie bialgebra structures on
the Lie algebra of formal power series
$ \mathfrak{g}[\![x]\!] $.
In this work we consider arbitrary
subspaces of $ L $ complementary to $\Delta$ and associate them with
so-called
series of type $ (n,s) $.

We prove that Lagrangian subspaces are in bijection with skew-symmetric
$ (n,s) $-type series and
topological quasi-Lie bialgebra
structures on $ \mathfrak{g}[\![x]\!] $. 
Using the classificaiton
of Manin pairs we classify up to
twisting and coordinate transformations
all quasi-Lie bialgebra structures.

Series of type $ (n,s) $,
solving the generalized classical Yang-Baxter
equation, correspond to subalgebras
of $L$.
We discuss their possible
utility in the theory of integrable
systems.
\end{abstract}

\begin{center}
\vspace{0.5cm}
    Dedicated to the memory of Yuri Manin
\vspace{0.5cm}
\end{center}

\section{Introduction}
Let \( F \) be an algebraically
closed field of characteristic \( 0 \)
equipped with the discrete topology
and \( \fg \) be a simple Lie algebra
over \( F \).
We define the Lie algebra \( \fg[\![x]\!] \)
to be the space
\( \fg \ot F[\![x]\!] \) with the bracket
\begin{equation}%
\label{eq:tensor_product_bracket}
    [a \ot f, b \ot g] = [a,b] \ot fg
\end{equation}
and we equip it with the \( (x) \)-adic
topology. The continuous dual of
\( \fg[\![x]\!] \) is denoted by
\( \fg[\![x]\!]'\) and it is 
endowed with the discrete topology.

A topological Manin pair
is a pair \( (L, \fg[\![x]\!]) \) where
\begin{enumerate}
    \item \( L \) is a Lie algebra equipped with an invariant non-degenerate
    symmetric bilinear form \( B \);
    \item \( \fg[\![x]\!] \subset L \) is a Lagrangian subalgebra with respect to \( B \); 
    \item for any continuous functional
    \( T \colon \fg[\![x]\!] \to F \) there is \( f \in L \) such that
    \( T = B(f, -) \).
\end{enumerate}
Topological Manin pairs were classified
in \cite{AMSZ} using the tools from
\cite{Stolin_Zelmanov_Montaner}.
More precisely, if 
\( (L, \fg[\![x]\!]) \)
is a topological Manin pair, then
\( L \) is isomorphic, 
as a Lie algebra
with form, to either
\( L(\infty) \)
or \( L(n, \alpha) \),
for some sequence
\( \alpha = (\alpha_i \in F \mid -\infty < i \le n-2) \) and an 
integer \( n \ge 0 \).
Here e.g.\ \( L(n, \alpha) \) is the Lie algebra \(\fg(\!(x)\!) \times \fg[x]/x^n\fg[x]\) equipped with a particular bilinear form defined by the sequence \( \alpha \). For exact definitions
see \cref{sec:top_manin_pairs}.

Let \( (L, \fg[\![x]\!]) \) be
a topological Manin pair.
Subspaces \( W \subset L \)
complementary to \( \fg[\![x]\!] \), i.e.\ 
\( \fg[\![x]\!] \add W = L \),
have interesting connections
to algebraic structures on
the Lie algebra \( \fg[\![x]\!] \) and solutions of the 
(generalized) classical Yang-Baxter
equation. This can be seen
from the following two examples.

\begin{example}%
\label{ex:Lagrangian_subalgebras_CYBE}
It was proven in \cite{AMSZ}
that topological Lie bialgebra
structures on \( \fg[\![x]\!] \)
are in one-to-one correspondence
with Lagrangian Lie subalgebras
of \( L(\infty) \) or
\( L(n, \alpha) \), \( 0 \le n \le 2 \), complementary to
\( \fg[\![x]\!] \).
Furthermore, such subspaces
are in bijection with formal
(classical)
\(r\)-matrices, i.e.\
series of the form
\begin{equation}%
\label{eq:classica_r_matr_intro}
    \frac{s(y)\Omega}{x-y} + g(x,y) = s(y) \Omega \sum_{k \ge 0} x^{-k - 1} y^{k} + g(x,y) \in (\fg \ot \fg)(\!(x)\!)[\![y]\!],
\end{equation}
where \( \Omega \in \fg \ot \fg \) is the quadratic Casimir element,  \( s(y) \in F[\![y]\!] \)
and \( g(x,y) \in (\fg \ot \fg)[\![x,y]\!] \), solving
the classical Yang-Baxter equation (CYBE). More precisely,
we have the following one-to-one
correspondences:

{\vspace{3mm}
\renewcommand{\arraystretch}{1.5}
    \centering
    \begin{tabular}{|m{6cm}|m{7.5cm}|}
    \hline
        Lagrangian subalgebras \(W \subset L(\infty)\), \newline
        \(W \add \fg[\![x]\!] = L(\infty)\) 
        &  
        Skew-symmetric series \( g(x,y) \in (\fg \ot \fg)[\![x,y]\!] \) solving CYBE
        \\
    \hline
         Lagrangian subalgebras \(W \subset L(n,\alpha)\), \newline
         \(W \add \fg[\![x]\!] = L(n,\alpha)\)
         &
         \(r\)-matrices \( \frac{s(y)\Omega}{x-y} + g(x,y) \), with \( s(y) \in y^nF[\![y]\!]^\times \)
        \\
        \hline
    \end{tabular} \par
}
\end{example}

\begin{example}%
\label{ex:subalgebras_GCYBE}
By e.g.\ \cite{skrypnyk_infinite_dimensional_Lie_algebras} (see \cite[Proposition 1.14]{abedin2021geometrization} for the formal case)
Lie subalgebras, not necessarily Lagrangian, \(W \subset L(0,0) \)
complementary to \( \fg[\![x]\!] \) are in bijection with
normalized formal generalized r-matrices, i.e.\
series of the form
\begin{equation}%
\label{eq:generalized_r_matr_intro}
    \frac{\Omega}{x-y} + g(x,y)
    = \Omega \sum_{k \ge 0} x^{-k - 1} y^{k} + g(x,y) \in (\fg \ot \fg)(\!(x)\!)[\![y]\!],
\end{equation}
where \( g(x,y) \in (\fg \ot \fg)[\![x]\!] \),
solving the generalized Yang-Baxter equation (GCYBE).
\end{example}

The proofs of the statements
given in
\cref{ex:Lagrangian_subalgebras_CYBE,ex:subalgebras_GCYBE}
lead to another viewpoint
on classical/generalized
\(r\)-matrices:
these objects are
generating series for
some specific subspaces of 
\( L(\infty) \) or \( L(n, \alpha) \),
\( 0 \le n \le 2 \), complementary
to \( \fg[\![x]\!] \).
In this paper we generalize and
develop this idea. 

We start by defining series of
type \( (n,s) \). 
Let us identify \( \fg[\![x]\!] \)
with the diagonal
\begin{equation}
    \Delta \coloneqq \{ (f, [f]) \mid f \in \fg[\![x]\!] \} \subset L(n,\alpha),
\end{equation}
and fix a basis \( \{b_i \}_{i=1}^{d} \)
of \( \fg \) orthonormal with respect
to its Killing form \( \kf \).
Instead of interpreting 
\( y^n\Omega/(x-y) \) as a series in
\( (\fg(\!(x)\!) \ot \fg )[\![y]\!] \)
we look at it as the series
\begin{equation}%
\label{eq:n_s_series_intro}
\begin{aligned}
    \frac{y^n\Omega}{x-y} = \sum_{k = 0}^{\infty}\sum_{i = 1}^d w_{k,i} \otimes b_i y^k \in \left(L(n, \alpha) \ot \fg \right)[\![y]\!].
\end{aligned}
\end{equation}
Elements \(w_{k,i} \in L(n,\alpha) = \fg(\!(x)\!) \times \fg[x]/x^n\fg[x]\) 
are presented explicitly in \cref{eq:def_wki}.
A series of \( (n,s) \)-type
is a series of the form
\begin{equation}
    \frac{s(x)y^n \Omega}{x-y} + g(x,y) \in 
    (L(n,\alpha) \ot \fg)[\![y]\!],
\end{equation}
where 
\( s \in F[\![x]\!]^\times \)
and 
\( g \in (\fg \ot \fg)[\![x,y]\!] \); See \cref{def:n_s_series}.
For each series \( r \) of type \( (n,s) \) we define
another series \( \overline{r} \) 
of the same type as follows
\begin{equation}
    \overline{r} = \frac{s(y)x^n\Omega}{x-y} - \tau(g(y,x)),
\end{equation}
where \( \tau \) is the \( F[\![x,y]\!] \)-linear
extension of the map \( a \ot b \mapsto b \ot a \).

The first main result of this paper
is that such series give a description
of subspaces \( W \subset L(n,\alpha) \)
complementary to \( \Delta \).
\begin{maintheorem}%
\label{mainthm:thmA}
Let \(n \in \mathbb{Z}_{\ge 0} \) 
and \( \alpha = (\alpha_i \in F \mid -\infty < i \le n-2) \) be an
arbitrary sequence with the corresponding series \( \alpha(x) \coloneqq x^{-n} + \alpha_{n-2}x^{-n+1} + \dots + \alpha_0 x^{-1} + \dots \in F(\!(x)\!) \).
For any \( (n,s) \)-type series
\begin{equation}
    r = \sum_{k = 0}^{\infty}\sum_{i = 1}^d f_{k,i} \otimes b_i y^k \in \left(L(n, \alpha) \ot \fg \right)[\![y]\!]
\end{equation}
define the space
\begin{equation}
    W(r) \coloneqq \textnormal{span}_F\{ f_{k,i} \mid k \ge 0, \, 1 \le i \le d \} \subseteq L(n,\alpha).
\end{equation}
The following results are true:
\begin{enumerate}
    \item \(W\) defines a bijection between series of type \(\left(n,\frac{1}{x^n \alpha(x)}\right)\) and subspaces \(V \subset L(n,\alpha)\)
    complementary to the diagonal
    \( \Delta \), i.e.\
    \( L(n,\alpha) = \Delta \add V;\)
    \item For any series \( r \) of type \(\left(n,\frac{1}{x^n \alpha(x)}\right)\)
    we have \( W(r)^\perp = W(\overline{r}) \) inside \( L(n, \alpha) \);

    \item Any series \(r\) of type \(\left(n,\frac{1}{x^n \alpha(x)}\right)\) satisfies \(\textnormal{GCYB}(r) = \psi\) (see \cref{def:gcybe} for the meaning of \(\textnormal{GCYB}(r)\)), where \(\psi \in (\fg \otimes \fg \otimes \fg)[\![x_1,x_2,x_3]\!]\) is defined by 
    \[B(v_1 \otimes v_2 \otimes v_3,\psi) = B(v_1,[v_2,v_3])\]
    for all \(v_1 \in W(\overline{r}),v_2,v_3 \in W(r)\).
\end{enumerate}
\end{maintheorem}


In particular, considering
the cases when
\( r \) is skew-symmetric
or \( \psi  = 0\)
we get the following correspondences.

\begin{maincorollary}%
\label{maincorollary}
    Let \( n \in \mathbb{Z}_{\ge 0} \), \( \alpha = (\alpha_i \in F \mid -\infty < i \le n-2) \)  and \( W \)
    be the map from \cref{mainthm:thmA}.
    Then
    \begin{enumerate}
        \item \( W \)
    defines a bijection between skew-symmtric
    \(\left(n,\frac{1}{x^n \alpha(x)}\right)\)-type series
    and Lagrangian subspaces \(V \subseteq L(n,\alpha)\) 
    complementary to the diagonal \( \Delta \);
    \item \(W\) defines a bijection between \(\left(n,\frac{1}{x^n \alpha(x)}\right)\)-type series solving GCYBE and subalgebras \(V \subseteq L(n,\alpha)\) complementary to the diagonal \( \Delta \).
    \end{enumerate}
\end{maincorollary}

The requirement on a series \( r \) of type
\( (n,s) \) to solve the CYBE is equivalent to
being skew-symmetric and to solve GCYBE.
Together with \cref{maincorollary}
this implies that Lagrangian subalgebras
\( W \subset L(n,\alpha) \) satisfying
\( W \add \Delta = L(n,\alpha) \)
are in bijection with \( (n,s) \)-type
series solving the classical Yang-Baxter equation.
These correspondences are schematically depicted
in \cref{fig:correspondence}.
\begin{figure}[h!]
    \centering
    \includegraphics[scale=0.35]{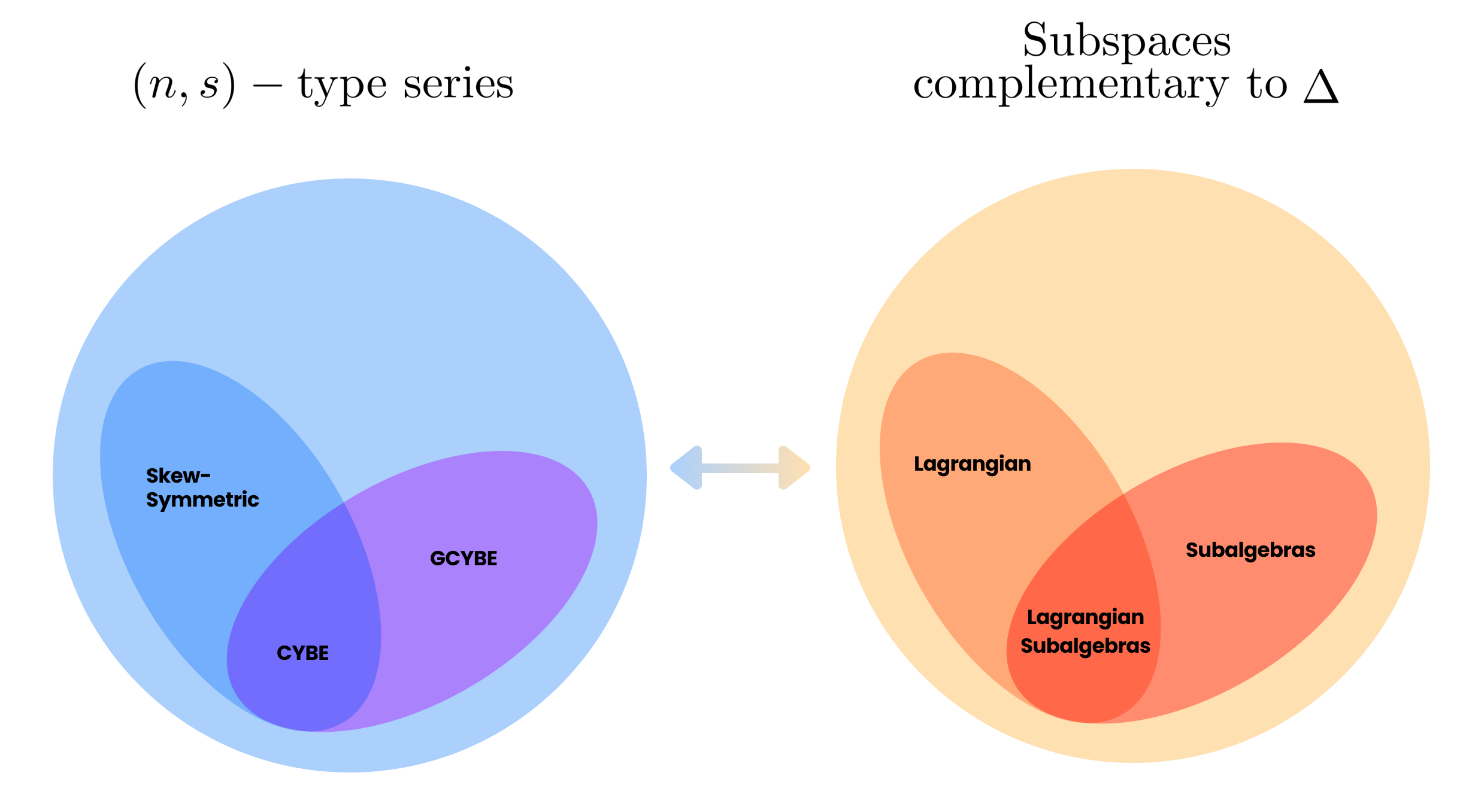}
    \caption{Series-subspaces correspondence}
    \label{fig:correspondence}
\end{figure}

The results above, at first glance, may look different
from the ones in \cref{ex:Lagrangian_subalgebras_CYBE,ex:subalgebras_GCYBE},
because \( (n,s) \)-type series do not live in the space
\( (\fg \ot \fg)(\!(x)\!)[\![y]\!] \).
However, if we start with a series \( R \) of type \( (n,s) \)
and simply reinterpret its singular part \( y^n\Omega/(x-y) \),
as it was done in \cref{eq:classica_r_matr_intro},
we obtain an element \(r\) in the space \( (\fg \ot \fg)(\!(x)\!)[\![y]\!] \).
And conversely, starting with an element \(r\) of the form
\cref{eq:classica_r_matr_intro} or \cref{eq:generalized_r_matr_intro}
and reinterpreting its singular part as an element of
\( (L(n,\alpha) \ot \fg)[\![y]\!] \)
we get a series \( R \) of type \( (n,s) \);
See \cref{rem:projections_vs_large_r}.
The first procedure is equivalent to the projection of
\(L(n,\alpha)\) onto its left component \( \fg(\!(x)\!) \)
and the inverse operation is equivalent to
taking two Taylor series expansions of \( r \)
at \( x = 0\) and \( y = 0\) respectively and then
constructing \(R\) by combining the coefficients of
\( b_i y^k \), \( k \ge 0 \), in these expansions.
The latter operation is exactly the tool that
was used in \cite[Section 5]{AMSZ}
to prove the relations presented in
\cref{ex:Lagrangian_subalgebras_CYBE}.
Moreover, by definition \( L(0,0) \cong \fg(\!(x)\!) \)
and hence the \( (0,0) \)-type series are precisely
the formal generalized \(r\)-matrices mentioned in
\cref{eq:generalized_r_matr_intro}.
Therefore, the statements of 
\cref{mainthm:thmA,maincorollary} indeed generalize 
and extend the examples above.

Reinterpreting the results of
\cite{AMSZ}
in terms of \( (n,s) \)-type series we see that
skew-symmetric series of type
\( \left(n, \frac{1}{x^n \alpha(x)}\right) \)
that also solve GCYBE exist only for
\( n = 0, 1\) and \( n = 2 \) with \( \alpha_0 = 0 \).

Lagrangian subalgebras of \( L(n,\alpha) \)
or \( L(\infty) \)
complementary to \( \Delta \) correspond
to topological Lie bialgebra structures on 
\( \fg[\![x]\!] \).
If we instead consider Lagrangian subspaces 
(not necessarily subalgebras)
of \( L(n,\alpha) \) or \( L(\infty) \), 
we get so called
topological quasi-Lie bialgebra structures
on \( \fg[\![x]\!] \).
A topological quasi-Lie bialgebra
structure on \( \fg[\![x]\!] \)
consists of
\begin{itemize}
    \item a skew-symmetric continuous linear map
    \( \delta \colon \fg[\![x]\!] \to (\fg \ot \fg)[\![x,y]\!] \) and
    
    \item a skew-symmetric element
    \( \varphi \in (\fg \ot \fg \ot \fg)[\![x,y,z]\!] \),
\end{itemize}
which are subject to the following three conditions
\begin{enumerate}
    \item \(\delta([a,b]) = [a \ot 1 + 1 \ot a, \delta(b)] - [b \ot 1 + 1 \ot b , \delta (a)]\), i.e.\  \( \delta \) is a \( 1 \)-cocycle;
    
    \item \(\frac{1}{2}\textnormal{Alt}((\delta \ot 1)\delta(a)) = 
    [a \ot 1 \ot 1 + 1 \ot a \ot 1 + 1 \ot 1 \ot a, \varphi]\);
    
    \item \( \textnormal{Alt}((\delta \ot 1 \ot 1)\varphi) = 0 \),
\end{enumerate}
where \( \textnormal{Alt}(x_1 \ot \dots \ot x_n) \coloneqq \sum_{\sigma \in S_n} \textnormal{sgn}(\sigma)x_{\sigma(1)} \ot \dots \ot x_{\sigma(n)} \).

Following
\cite{alekseev_kosmann}
we prove the following direct relation between
\( \delta \), \( \varphi \) and skew-symmetric \((n,s)\)-type series \( r \).
\begin{mainproposition}\label{mainprop:B}
There is a bijection between topological quasi-Lie bialgebras 
and skew-symmetric \((n,s)\)-type series.
Let \( r \) be the \((n,s)\)-type
series corresponding to
\( (\fg[\![x]\!], \delta, \varphi) \), then, under the identification
\( \fg[\![x]\!] \cong \Delta \),
we have the following identities:
\begin{itemize}
    \item \( [a \ot 1 + 1 \ot a, r] = -\delta(a) \)
    for any \( a \in \fg[\![x]\!] \) and
    \item \( \textnormal{CYB}(r) = -\varphi \).
\end{itemize}
The same is true if \( r \) is interpreted as
an element in \( (\fg \ot \fg)(\!(x)\!)[\![y]\!] \).
\end{mainproposition}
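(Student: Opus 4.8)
The plan is to obtain the asserted bijection by composing \cref{mainthm:thmA}(3) with the standard dictionary between Lagrangian complements in a topological Manin pair and quasi-Lie bialgebra structures on its Lagrangian subalgebra, and then to verify the two displayed identities by a direct computation in the spirit of \cite{alekseev_kosmann}.

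First I would recall, for a general topological Manin pair $(L, \fg[\![x]\!])$ with Lagrangian complement $V$, how the pair $(\delta,\varphi)$ is produced. The form $B$ identifies $V$ with the continuous dual $\fg[\![x]\!]'$, the projection $p_V\colon L \to V$ along $\Delta$ turns $[\Delta(a),-]$ on $V$ into a coadjoint-type action, and, writing $[\xi,\eta]_L = p_V[\xi,\eta]_L + p_\Delta[\xi,\eta]_L$ for $\xi,\eta\in V$, the $p_V$-component dualizes to a skew-symmetric continuous map $\delta\colon \fg[\![x]\!] \to (\fg\ot\fg)[\![x,y]\!]$ while the $p_\Delta$-component dualizes to a skew-symmetric element $\varphi\in(\fg\ot\fg\ot\fg)[\![x,y,z]\!]$. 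One then checks that conditions (1)--(3) in the definition of a topological quasi-Lie bialgebra are exactly the translations of the Jacobi identity on $L$ together with the invariance and skew-symmetry of $B$; this is where I would invoke \cite{alekseev_kosmann} (and the classification of Manin pairs from \cite{AMSZ}, which guarantees that, up to the freedom in $\alpha$, every such structure is realised inside some $L(n,\alpha)$). Composing this correspondence with \cref{mainthm:thmA}(3) yields the bijection between topological quasi-Lie bialgebras and skew-symmetric $(n,s)$-type series.

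Next I would prove the first identity. Write $r = \sum_{k,i} f_{k,i}\ot b_i y^k$ with $\{f_{k,i}\}$ spanning $W(r) = V$. The key structural input, extracted from the proof of \cref{mainthm:thmA}, is that $r$ represents the canonical copairing of the nondegenerate pairing $B|_{\Delta\times V}$: pairing the first tensor slot of $r$ against $\Delta(a)$ via $B$ returns $a(y)$, up to the normalization built into $s = \frac{1}{x^n\alpha(x)}$. Given this, I would compute $[a\ot 1 + 1\ot a, r]$ --- the bracket of $L(n,\alpha)$ acting in the first slot, the bracket of $\fg[\![y]\!]$ in the second --- and use invariance of $B$ to transfer the adjoint action from the first slot onto the pairing; the first slot then lands in $\Delta$ precisely through $p_\Delta$, and matching with the definition of $\delta$ recalled above gives $[a\ot 1 + 1\ot a, r] = -\delta(a)$. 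The identity for $r$ reinterpreted as an element of $(\fg\ot\fg)(\!(x)\!)[\![y]\!]$ then follows since, as explained around \cref{rem:projections_vs_large_r}, projecting $L(n,\alpha)$ onto its left component $\fg(\!(x)\!)$ is compatible with all the operations involved.

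Finally, for $\textnormal{CYB}(r) = -\varphi$ I would expand $[r^{12},r^{13}] + [r^{12},r^{23}] + [r^{13},r^{23}]$, a priori in $(L(n,\alpha)\ot\fg\ot\fg)[\![y,z]\!]$, substitute $[f_{k,i},f_{l,j}]_L = p_V[f_{k,i},f_{l,j}]_L + p_\Delta[f_{k,i},f_{l,j}]_L$, and use the first identity to rewrite the $p_V$-terms. These terms reassemble into $(\delta\ot 1)r$-type expressions that cancel by skew-symmetry of $r$ (equivalently $r = \overline r$), which is also what forces the singular part of $\textnormal{CYB}(r)$ to vanish, so that the result indeed lies in $(\fg\ot\fg\ot\fg)[\![x,y,z]\!]$; the surviving $p_\Delta$-terms are exactly $-\varphi$ by the definition of $\varphi$. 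I expect the main obstacle to be precisely this last bookkeeping step: controlling the two (resp.\ three) formal variables, verifying that the a priori $L(n,\alpha)$-valued expression collapses into $\Delta$, and handling the infinite sums and $1/(x-y)$-expansions rigorously --- routine in principle but delicate in practice, and the technical heart borrowed from \cite{alekseev_kosmann}.
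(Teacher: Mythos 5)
Your proposal follows essentially the same route as the paper: the bijection is obtained by combining the Manin-pair/quasi-Lie-bialgebra dictionary (the paper's \cref{lem:M_pair_q_bial_correspondance}, where $\delta$ and $\varphi$ arise as duals of the two components of the bracket on the Lagrangian complement $\cong \fg[\![x]\!]'$) with \cref{thm:series_and_subspaces}, and the identities are then checked by writing $r$ as the canonical element $\sum_{k,i} v_{k,i}\ot\varepsilon_{k,i}$ and pairing against dual bases, using the invariance-derived formula $[a,f]=-f\circ\ad_a+(f\ot 1)\delta(a)$. The paper dispatches $\textnormal{CYB}(r)=-\varphi$ as a ``similar straightforward calculation,'' so your extra bookkeeping sketch for that step is consistent with, and slightly more explicit than, what is written there.
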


In view of this result we call skew-symmetric
\( (n,s) \)-type series quasi-\(r\)-matrices.

Repeating the ideas from
\cite{Drinfeld_quasi_hops_algebras} and
\cite{alekseev_kosmann} we show
that topological quasi-Lie bialgebras
can be twisted similar to topological Lie bialgebras.
More precisely, if \( \delta \)
is a quasi-Lie bialgebra structure
on \( \fg[\![x]\!] \), given by
the Lagrangian subspace \( W \),
and \( s = \sum_{i} a_i \ot b^i \in (\fg \ot \fg)[\![x,y]\!] \) 
is an arbitrary skew-symmetric tensor, then
\begin{equation}
    W_s \coloneqq \left\{ \sum_{i} B(b^i, w)a_i - w \mid w \in W  \right\}
\end{equation}
is another (twisted) Lagrangian subspace
complementary to the diagonal.
This observation implies, that in order to
classify all topological
quasi-Lie bialgebra structures on
\( \fg[\![x]\!] \) up to twisting
it is enough to find one single Lagrangian
subspace within each \( L(n,\alpha) \)
and \( L(\infty) \).
Moreover, allowing substitutions of the form
\( x \mapsto x + a_2 x^2 + a_3 x^3 + \dots \),
\( a_i \in F \), we can without loss of generality
assume that our sequence \( \alpha \) has the
form \[ \alpha = (\dots, 0, \alpha_0, 0, \dots, 0). \]
Lagrangian subspaces for such \( L(n,\alpha) \)
and \( L(\infty) \) are constructed in
\cref{sec:Lagrangian_subspaces}.

Using \cref{mainthm:thmA} and \cref{mainprop:B}
we explain how twisting of a Lagrangian subspace
\( W \subset L(n,\alpha) \) is seen at the level
of \(\delta\) and the corresponding
quasi-\(r\)-matrix \(r\).

\begin{maincorollary}
Let \( (\fg[\![x]\!], \delta, \varphi ) \)
be a topological quasi-Lie bialgebra
structure
corresponding to the quasi-\(r\)-matrix \(r\).
If we twist \( W(r) \) with a skew-symmetric tensor
\(s\) we obtain
another topological
quasi-Lie bialgebra
\( (\fg[\![x]\!], \delta_s, \varphi_s) \), such that
\begin{enumerate}
    \item \( W(r)_s = W(r-s) \);
    \item \( \delta_s = \delta + ds \);
    \item \(\varphi_s = \varphi + \textnormal{CYB}(s) - \frac{1}{2}\textnormal{Alt}((\delta \ot 1)s) \).
\end{enumerate}
\end{maincorollary}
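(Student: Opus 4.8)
The plan is to reduce everything to the three identities in \cref{mainprop:B} together with the explicit relation $W(r)_s = W(r-s)$, which is essentially a restatement of the twisting formula. First I would prove item (1). By definition $W(r)_s = \{\sum_i B(b^i,w)a_i - w \mid w \in W(r)\}$, where $s = \sum_i a_i \ot b^i$. The key observation is that if $r = \sum_{k,i} f_{k,i}\ot b_i y^k$ is the $(n,s)$-type series with $W(r) = \mathrm{span}_F\{f_{k,i}\}$, then subtracting the skew-symmetric tensor $s \in (\fg\ot\fg)[\![x,y]\!]$ changes only the ``regular'' part $g$ of $r$, hence leaves $r - s$ an $(n,s)$-type series of the \emph{same} type (the singular part $s(x)y^n\Omega/(x-y)$ is untouched). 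So $W(r-s)$ is well-defined via \cref{mainthm:thmA}. To identify it with $W(r)_s$, I would expand $s = \sum_{k\ge 0} s_k(x) y^k$ with $s_k \in (\fg\ot\fg)[x]$ coefficient-wise, write $s_k = \sum_i u_{k,i}\ot b_i$ using the orthonormal basis, and check that the coefficient of $b_i y^k$ in $r - s$ is $f_{k,i} - u_{k,i}$. It then remains to verify that $f_{k,i} - u_{k,i}$ is exactly the element $\sum_j B(b^j, f_{k,i}) a_j - f_{k,i}$ produced by the twisting recipe; this is where I would invoke the fact that under the identification $\fg[\![x]\!]\cong\Delta$ the form $B$ restricted appropriately recovers the Killing-orthonormality pairing with the $b_i$'s, so that $B(b^j, -)$ acting inside $L(n,\alpha)\ot\fg$ reproduces the tensor contraction defining $r \mapsto r - s$. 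I expect this bookkeeping — matching the abstract twisting map on $W$ with the subtraction of $s$ from the generating series — to be the main obstacle, though it is conceptually straightforward once the indices are lined up.

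Given item (1), items (2) and (3) follow by applying \cref{mainprop:B} to the quasi-$r$-matrix $r - s$. Indeed, the topological quasi-Lie bialgebra structure $(\fg[\![x]\!], \delta_s, \varphi_s)$ is by definition the one attached to the Lagrangian subspace $W(r)_s = W(r-s)$, so by \cref{mainprop:B} we have $[a\ot 1 + 1\ot a, r-s] = -\delta_s(a)$ for all $a\in\fg[\![x]\!]$ and $\mathrm{CYB}(r-s) = -\varphi_s$. For item (2): $[a\ot 1 + 1\ot a, r-s] = [a\ot 1 + 1\ot a, r] - [a\ot 1 + 1\ot a, s] = -\delta(a) - [a\ot 1 + 1\ot a, s]$, again using \cref{mainprop:B} for $r$. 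Comparing, $\delta_s(a) = \delta(a) + [a\ot 1 + 1\ot a, s]$, and since $ds$ is precisely the cocycle $a \mapsto [a\ot 1 + 1\ot a, s]$ (the coboundary of $s$ in the relevant complex, matching condition (1) in the definition of quasi-Lie bialgebra), this gives $\delta_s = \delta + ds$.

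For item (3): expand $\mathrm{CYB}(r-s) = [(r-s)^{12}, (r-s)^{13}] + [(r-s)^{12},(r-s)^{23}] + [(r-s)^{13},(r-s)^{23}]$ trilinearly in $r$ and $s$. Collecting the purely-$r$ terms gives $\mathrm{CYB}(r) = -\varphi$; the purely-$s$ terms give $\mathrm{CYB}(s)$; and the cross terms, linear in $s$ and linear in $r$, assemble — using skew-symmetry of $s$ and the identities $[a\ot 1 + 1\ot a, r] = -\delta(a)$ applied componentwise — into $\tfrac12 \mathrm{Alt}((\delta\ot 1)s)$ up to sign. This is the standard computation from \cite{Drinfeld_quasi_hops_algebras} and \cite{alekseev_kosmann}: the mixed brackets $[r^{12}, s^{13}]$ etc., when the $\fg[\![x]\!]$-legs of $s$ are differentiated by the adjoint action encoded in $r$, reproduce the $(\delta\ot 1)s$ expression after alternation. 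Hence $-\varphi_s = \mathrm{CYB}(r-s) = -\varphi + \mathrm{CYB}(s) - \tfrac12\mathrm{Alt}((\delta\ot 1)s)$, which is item (3). The only delicate point here is getting the signs and the factor $\tfrac12$ correct and confirming that the cross terms really do collapse to $\mathrm{Alt}((\delta\ot 1)s)$ rather than to some other alternation; I would handle this by working in the $(\fg\ot\fg)(\!(x)\!)[\![y]\!]$ picture, where $r$ is an honest tensor-valued series and the bracket computations are the classical ones, and then transporting back via the equivalence noted at the end of \cref{mainprop:B}.
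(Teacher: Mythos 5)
Your proposal is correct and follows essentially the same route as the paper, which proves this statement simply by combining \cref{lem:CYB(r)_phi_delta_r} (the identities $[a\ot 1+1\ot a,r]=-\delta(a)$ and $\mathrm{CYB}(r)=-\varphi$), the twisting formula \cref{eq:twist_by_s_of_W}, and the dual-basis algorithm from \cref{thm:series_and_subspaces} that matches the twist of $W(r)$ with the subtraction of $s$ from the generating series. Your outline is in fact more explicit than the paper's one-line justification; the only point left open in both is the final sign/factor bookkeeping in the trilinear expansion of $\mathrm{CYB}(r-s)$, which you correctly flag as the delicate step.
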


Therefore,
to describe all quasi-\(r\)-matrices
up to twisting it is enough
to find one single quasi-\(r\)-matrix
for each \( L(n,\alpha) \).
We achieve that goal in
\cref{sec:corresponding_quasi_r_matr}
by writing out explicitly series
of type \( (n,s) \) for subspaces from
\cref{sec:Lagrangian_subspaces}.

The results above, in particular, show that if 
\( r \) is a quasi-\(r\)-matrix
and \( \delta(a) \coloneqq [a \ot 1 + 1 \ot a, r] \), then the condition
\begin{equation}
    \textnormal{Alt}((\delta \ot 1 \ot 1) \textnormal{CYB}(r)) = 0
\end{equation}
is trivially satisfied.

We conclude the paper by
using \cref{mainthm:thmA}
for construction of Lie algebra
splittings
\( \Delta \add W = L(n,\alpha) \)
and the corresponding
\( (n,s) \)-type series,
which we call generalized \(r\)-matrices.
These constructions are important
in the theory of integrable systems
because of their use in
the Adler-Konstant-Symes (AKS) scheme and the so-called \(r\)-matrix approach; see \cite{adler_moerbeke_vanhaecke,babelon_bernard_talon}.
The subalgebra splittings of 
\( L(0,0) \)
as well as their physical applications were considered in e.g.\
\cite{skrypnyk_infinite_dimensional_Lie_algebras,Skrypnik_phys_ref}.

Our first result tells us that
in order to obtain new generalized
\( r \)-matrices from
subalgebra splittings
\( L(n,\alpha) = \Delta \add W \) with \( n > 2 \),
the subalgebra \( W \) must be
unbounded. Otherwise
the situation can be reduced
to the splitting of \( L(2,\alpha) \).

\begin{mainproposition}
    Let \(L(n,\alpha) = \Delta \dotplus W\) 
    for some subalgebra \(W \subset L(n,\alpha)\).
    Assume \( W \) is bounded, i.e.\ 
    there is an integer \( N > 0 \)
    such that
    \begin{equation*}
        x^{-N}\fg[x^{-1}] \subseteq W_+ \subseteq x^N\fg[x^{-1}],
    \end{equation*}
    where \(W_+\) is the projection of 
    \(W \subset L(n,\alpha) = \fg(\!(x)\!) \oplus \fg[x]/x^n\fg[x]\) 
    on the first component \(\fg(\!(x)\!)\). 
    Then we have the inclusion
    \[\{0\} \times [x^2]\fg[x]/x^n\fg[x] \subseteq W\] and the image \(\widetilde{W}\) under the canonical projection \(L(n,\alpha) \to L(2,\alpha)\) is a subalgebra satisfying \(L(2,\alpha) = \Delta \dotplus \widetilde{W}\). 

\end{mainproposition}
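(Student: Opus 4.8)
The plan is to deduce everything from the single inclusion $\{0\}\times\fp_n\cdot x^2\subseteq W$, where I write $\fp_n\coloneqq\fg[x]/x^n\fg[x]$ (we may assume $n\ge3$, the remaining cases being trivial). The canonical projection $\pi\colon L(n,\alpha)\to L(2,\alpha)$ is the identity on the factor $\fg(\!(x)\!)$ and the reduction $\fp_n\to\fp_2$ on the second factor, so $\ker\pi=\{0\}\times x^2\fp_n$. Once this kernel is known to lie in $W$, the image $\widetilde W=\pi(W)\cong W/\ker\pi$ is a subalgebra (image of a subalgebra under a Lie homomorphism), one has $\Delta\add\widetilde W\supseteq\pi(\Delta\add W)=L(2,\alpha)$, and the sum is direct: if $w\in W$ has $\pi(w)$ in the diagonal of $L(2,\alpha)$, pick $\delta$ in the diagonal of $L(n,\alpha)$ with $\pi(\delta)=\pi(w)$; then $w-\delta\in\ker\pi\subseteq W$, so $\delta\in W\cap\Delta=\{0\}$ and $\pi(w)=0$. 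Thus it suffices to prove $\{0\}\times x^2\fp_n\subseteq W$.

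To that end I would first linearise using boundedness. For $[q]\in\fp_n$ write the unique decomposition $(0,[q])=(f_q,[f_q])+w_q$ with $f_q\in\fg[\![x]\!]$ and $w_q\in W$ (here $[\,\cdot\,]$ denotes reduction mod $x^n$). Its first component is $-f_q\in W_+\subseteq x^N\fg[x^{-1}]$, and since $f_q$ is also a power series it is a polynomial of degree $\le N$. Uniqueness of the decomposition, which uses $\Delta\cap W=\{0\}$, makes $\Phi\colon\fp_n\to\fg[x]_{\le N}$, $[q]\mapsto f_q$, a well-defined linear map into the finite-dimensional space of $\fg$-valued polynomials of degree $\le N$, and $(0,[q])\in W$ exactly when $\Phi([q])=0$. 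So the goal becomes $\Phi|_{x^2\fp_n}=0$; by a short dimension count comparing $\ker(W\twoheadrightarrow\widetilde W)$ with $\ker\pi$ this is in turn equivalent to $\Delta\cap\widetilde W=\{0\}$ in $L(2,\alpha)$.

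The heart of the matter — showing $\Phi(x^2\fp_n)=0$ — is where the subalgebra hypothesis is used. The subspace $\{w_q\mid[q]\in\fp_n\}$ equals $W\cap(\fg[\![x]\!]\times\fp_n)$, hence is a subalgebra, and $[q]\mapsto w_q$ is a linear isomorphism onto it. Transporting the bracket along this isomorphism and using that both projections $L(n,\alpha)\to\fg(\!(x)\!)$ and $L(n,\alpha)\to\fp_n$ are Lie homomorphisms, one obtains on $\fp_n$ a twisted bracket $\{a,b\}=[a,b]-[\phi a,b]-[a,\phi b]$ (with $\phi\coloneqq[\Phi(\,\cdot\,)]\in\operatorname{End}(\fp_n)$) for which $\Phi$ intertwines $\{\cdot,\cdot\}$ with $-[\cdot,\cdot]$ on $\operatorname{im}\Phi$, and in particular $\phi\{a,b\}=-[\phi a,\phi b]$. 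Restricted to the ideal $x^2\fp_n$ (which is $\{\cdot,\cdot\}$-stable) this produces an explicit family of quadratic identities among the coefficients of the polynomials $\Phi(x^kv)$, $k\ge2$; their lowest component already says that $v\mapsto\Phi(x^2v)(0)$ is a Rota--Baxter operator of weight zero on $\fg$ — which admits nonzero solutions, so the vanishing of $\Phi$ on $x^2\fp_n$ cannot come from this relation alone. It is here that boundedness must be exploited globally: the fact that every $f_q=\Phi([q])$ is a polynomial of bounded degree, fed through the whole coupled system of identities (which also ties $\Phi|_{x^2\fp_n}$ to $\Phi|_{x\fp_n}$ and then down to $\Phi|_{\fg}$), forces all of them to be trivial and hence $\Phi(x^2\fp_n)=0$. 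Extracting this is the main obstacle; everything before and after it is routine. One may also phrase the crux via \cref{mainthm:thmA}(4): writing $W=W(r)$ for an $\left(n,\tfrac{1}{x^n\alpha(x)}\right)$-type series $r=\tfrac{s(x)y^n\Omega}{x-y}+g(x,y)$ solving GCYBE, boundedness of $W$ forces $g$ to be polynomial of bounded degree in $x$, and the GCYBE then constrains $\deg_x g\le1$, so that the splitting carries no data in degrees $2,\dots,n-1$ and descends to $L(2,\alpha)$.
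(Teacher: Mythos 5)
Your reductions are correct but they are not where the difficulty lies. The passage from $\ker\pi\subseteq W$ to the statement about $\widetilde W$ is exactly the part the paper dismisses as straightforward, and recasting the key inclusion as $\Phi|_{x^2\fp_n}=0$ is a fair reformulation. The gap is that you never establish this key claim: your third paragraph builds the twisted bracket, correctly notices that the resulting quadratic identities (a Rota--Baxter-type condition in lowest order) admit nonzero solutions and therefore cannot force the vanishing by themselves, and then simply asserts that feeding boundedness "through the whole coupled system of identities" does the job, while conceding that "extracting this is the main obstacle." That obstacle is the entire content of \cref{prop:new_subalgs_are_bounded}. The paper closes it by a structural, non-elementary input: since $W\add\Delta=L(n,\alpha)$ forces $W_++\fg[x]=\fg[x,x^{-1}]$, boundedness says precisely that $W_+$ is an order, and the classification of such orders in \cite{Stolin_rational} shows that, after applying some $\sigma\in\Aut_{F[x]\textnormal{-LieAlg}}(\fg[x])$, $W_+$ lands inside a maximal order $\mathfrak{M}\subseteq x\fg[x^{-1}]$ attached to the fundamental simplex. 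Only then does your map $\Phi$ take values in polynomials of degree $\le 1$, which together with a dimension count over $W\cap(\fg[\![x]\!]\times\fp_n)$ yields $\{0\}\times x^2\fp_n\subseteq(\sigma\times\sigma)W$; since $x^2\fp_n$ is stable under the $F[x]$-linear automorphism $\sigma$, the inclusion descends to $W$ itself. No elementary manipulation of the bracket relations you wrote down replaces this classification — your own observation about nonzero Rota--Baxter operators is already evidence of that.

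Your fallback via \cref{mainthm:thmA}(4) has the same problem in sharper form: the claim that boundedness plus the GCYBE forces $\deg_x g\le 1$ is essentially Stolin's theorem restated, and as stated it is false without the gauge automorphism. Applying $\sigma(x)\ot\sigma(y)$ to a splitting preserves boundedness, complementarity and the subalgebra property, but $(\sigma(x)\ot\sigma(y))\Omega-\Omega$ is divisible by $x-y$ and contributes a polynomial of higher $x$-degree to $g$; this is exactly why the precise version of the proposition in \cref{sec:algebra_splittings} asserts $(\sigma\times\sigma)W\subseteq x\fg[x^{-1}]\times\fg[x]/x^n\fg[x]$ rather than a bound on $W_+$ itself. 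What survives the gauge transformation is only the weaker, $\sigma$-invariant conclusion $\{0\}\times[x^2]\fg[x]/x^n\fg[x]\subseteq W$. To repair your argument you must either cite the order classification of \cite{Stolin_rational} at the crucial step or supply an independent proof of it; as written, the proposal proves only the routine surrounding material.
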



Despite this result we think that
bounded subalgebras
\( W \subset L(n,\alpha) \)
complementary to \( \Delta \)
are still interesting, because
in the case \( \alpha \neq 0 \)
they lead to unbounded
orthogonal complements
\( W^\perp \) which are also
important in view of the AKS scheme.
We give examples of subalgebras
of \(L(n,\alpha) \) with unbounded
orthogonal complements.

\section*{Acknowledgment} The work of R.A. is supported by the DFG project AB-940/1-1.

\section{Topological Manin pairs}%
\label{sec:top_manin_pairs}
Let \( F \) be an algebraically closed field
of characteristic \( 0 \),
\( \fg \) be a finite-dimensional simple \( F \)-Lie algebra
and
\( \fg[\![x]\!] \coloneqq \fg \ot F[\![x]\!] \) be the Lie
algebra with the bracket defined by
\begin{equation*}
  [a \ot f, b \ot g] \coloneqq [a,b] \ot fg,
\end{equation*}
for all \( a,b \in \fg \) and \( f,g \in F[\![x]\!] \).
From now on, we always endow \( F \) with the discrete topology
and view \( \fg[\![x]\!] \) as 
a topological Lie algebra with the \( (x) \)-adic topology.

A \emph{topological Manin pair} is a pair \( (L , \fg[\![x]\!]) \),
where \( L \) is a Lie algebra equipped with an invariant
non-degenerate symmetric bilinear form \( B \), 
such that
\begin{enumerate}
    \item \( \fg[\![x]\!] \subseteq L \) is a Lagrangian 
    Lie subalgebra with respect to \( B \);
    \item for any continuous functional \( T \colon \fg[\![x]\!] \to F \)
    there exists an element \( f \in L \) such that
    \( T = B(f, -) \).
\end{enumerate}
The statements of
\cite[Proposition 2.9]{Stolin_Zelmanov_Montaner} and
\cite[Proposition 3.12]{AMSZ} 
give a description of all
topological Manin pairs.
For precise formulation we need to
repeat the definitions of some specific
Lie algebras with forms
from \cite[Section 3.2]{AMSZ} 
and \cite[Section 2]{Stolin_Zelmanov_Montaner}.
\begin{definition}\label{def:trivial_double}
We define the Lie algebra
\( L(\infty) \coloneqq \fg \ot A(\infty) \), where
\( A(\infty)\) is the unital commutative algebra with underlying space \(\sum_{i \ge 0} Fa_i \dotplus F[\![x]\!] \) and multiplication given by
\begin{align*}
  a_i a_j \coloneqq 0, \ a_ix^j \coloneqq a_{i-j} \text{ for } i \ge j \text{ and } a_i x^j \coloneqq 0 \text{ otherwise}.
\end{align*}
Let \( \tOp \colon A \to F \) 
be the functional, given
by \( \tOp(a_0) \coloneqq 1 \), \( \tOp(a_i) \coloneqq 0 \), \( i \ge 1 \) and
\( \tOp(F[\![x]\!]) \coloneqq 0 \).
We equip \( L(\infty) \)
with the symmetric non-degenerate
invariant bilinear form
\begin{equation}
    B\left(a \ot \left(\sum_{i \ge 0} c_i a_i, f(x)\right), b \ot \left(\sum_{i \ge 0} t_i a_i, g(x)\right) \right) \coloneqq
    \kf(a,b)\tOp\left(g(x)\sum_{i \ge 0} c_i a_i +  f(x)\sum_{i \ge 0} t_i a_i\right).
\end{equation}
\end{definition}
\begin{definition}
Let \( n \ge 1 \) and \( \alpha = (\alpha_i \in F \mid -\infty < i \le n-2) \) be an arbitrary sequence.
Consider the algebra
\begin{align*}
  A(n, \alpha) \coloneqq F(\!(x)\!) \oplus F[x]/(x^n).
\end{align*}
Abusing the notation we denote the element
\( x^{-n} + \alpha_{n-2}x^{-n+1} + \dots + \alpha_0 x^{-1} + \dots \in F(\!(x)\!) \)
with the same letter \( \alpha \).
Define the functional
\( \tOp \colon A(n, \alpha) \to F\)
by 
\begin{align*}
\tOp(f, [p]) \coloneqq \textnormal{res}_0 \left\{ \alpha(f- p) \right\}.
\end{align*}
Taking the tensor product
of \( A(n, \alpha) \) with \( \fg \)
we get the Lie algebra
\( L(n, \alpha) \coloneqq\fg \ot A(n, \alpha) \),
which we equip with the form
\begin{equation}%
\label{eq:form_L_n_alpha}
  B(a \ot (f, [p]), b \ot (g,[q]))
  \coloneqq \kappa(a,b)\tOp(fg,[pq]).
\end{equation}
It is known that the bilinear form
\( B \) is symmetric non-degenerate and
invariant. 
\end{definition}

\begin{definition}
Take an arbitrary sequence
\( \alpha = (\alpha_i \in F \mid -\infty < i \le -2) \) and let
\( A(0, \alpha) \coloneqq F(\!(x)\!) \).
We define the functional
\( \tOp \colon A(0, \alpha) \to F\) by
\begin{align*}
\tOp(f) \coloneqq \textnormal{res}_0 \left\{ \alpha f \right\},
\end{align*}
where \( \alpha = 1 + \alpha_{-2}x + \dots \in F(\!(x)\!) \). 
We equip the Lie
algebra \( L(0, \alpha) \coloneqq\fg \ot A(0, \alpha) \)
with the bilinear form
\begin{equation}
  B(a \ot f, b \ot g)
  \coloneqq \kappa(a,b)\tOp(fg),
\end{equation}
which is again symmetric
non-degenerate and invariant.
From now on we identify
\( F(\!(x)\!) \) with
\( F(\!(x)\!) \times \{ 0 \} \)
and write \( (f,0) \) for elements
in \( A(0, \alpha) \).
\end{definition}

\begin{definition}
A series of the form
\( \varphi = x + a_2 x^2 + a_3 x^3 + \dots \in F[\![x]\!]  \)
is called a \emph{coordinate transformation}.
Coordinate transformations 
form a group \( \textnormal{Aut}_0 F[\![x]\!] \) under substitution which we view
as a subgroup of automorphisms of \(F[\![x]\!]\).

An element \( \varphi \in \textnormal{Aut}_0 F[\![x]\!]\)
induces an automorphism of
\( A(n, \alpha) \) by
\( f/g \mapsto \varphi(f)/\varphi(g) \)
and \( [p] \mapsto [\varphi(p)] \)
that changes the functional
\( \tOp \) to \( \tOp \circ \, \varphi \).
We write \( A(n,\alpha)^{(\varphi)} \)
for the algebra \( A(n,\alpha) \)
with the functional \( \tOp \circ \, \varphi \).
It is not hard to see that for any \( \varphi \in \textnormal{Aut}_0 F[\![x]\!] \) there is a sequence
\( \beta \) such that
\( A(n,\alpha)^{(\varphi)} = A(n, \beta) \).
\end{definition}

Let \( (L, \fg[\![x]\!]) \) be a topological
Manin pair.
According to
\cite[Proposition 2.9]{Stolin_Zelmanov_Montaner}
as a Lie algebra with form
\( L \cong L(\infty) \) or
\( L \cong L(n, \alpha)\),
for some \( n \ge 0 \) and
some sequence \( \alpha \).
Here we identify \( \fg[\![x]\!] \)
with the diagonal
\[ \Delta \coloneqq \{ (f, [f]) \mid f \in \fg[\![x]\!] \}
\subset L(n, \alpha). \]
Moreover, we can assume
that all the elements \( \alpha_i \) 
in the sequence
\( \alpha \),
except maybe \( \alpha_0 \),
are \( 0 \) 
by virtue of the following result.

\begin{proposition}
{\cite[Proposition 3.12]{AMSZ}}%
\label{prop:Manin_pairs}
Let \(n \ge 0\) and \( \alpha = (\alpha_i \in F \mid -\infty < i \le n-2) \) be a sequence. There exists a  \(\varphi \in \textnormal{Aut}_0F[\![x]\!]\) such that \(A(n,\alpha) \cong A(n,\beta)^{(\varphi)}\), where \(\beta\) is the sequence satisfying \(\beta_i = 0\) for all \( i \neq 0\) and \(\beta_0 = \alpha_0\).
\end{proposition}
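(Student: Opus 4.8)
The plan is to reduce the assertion to the classical normal form of a formal meromorphic $1$-form and then to establish that normal form by an explicit order-by-order recursion. First I would unwind the meaning of $A(n,\alpha)\cong A(n,\beta)^{(\varphi)}$: both sides have underlying algebra $F(\!(x)\!)\oplus F[x]/(x^n)$, and the isomorphism is the automorphism attached to $\varphi$, so the statement is equivalent to equality of the two functionals, i.e.\ to
\[
\textnormal{res}_0\left\{\alpha(x)\,(f-p)(x)\right\}=\textnormal{res}_0\left\{\beta(x)\,(f-p)(\varphi(x))\right\}\qquad\textnormal{for all }(f,[p]).
\]
Writing $g\coloneqq f-p$ (which ranges over all of $F(\!(x)\!)$) and changing variables by $u=\varphi(x)$ in the right-hand residue --- legitimate since $\textnormal{res}_0$ is invariant under coordinate transformations --- the condition becomes $\alpha(x)=\beta(\varphi^{-1}(x))(\varphi^{-1})'(x)$ for all $g$, that is, $\alpha(\varphi(x))\,\varphi'(x)=\beta(x)$: in other words, $\varphi$ must transform the $1$-form $\alpha(x)\,dx$ into $\beta(x)\,dx$. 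So everything reduces to putting $\alpha(x)\,dx$ into the normal form $\beta(x)\,dx$, where with the paper's conventions $\beta(x)=x^{-n}+\alpha_0x^{-1}$ if $n\ge 2$, $\beta(x)=x^{-1}$ if $n=1$ and $\beta(x)=1$ if $n=0$ --- equivalently, the $1$-form with the same pole order $n$ and the same residue $\rho$ as $\alpha(x)\,dx$.

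Next I would peel off the residue. As $\alpha(x)-\rho x^{-1}$ has vanishing residue, it admits a formal primitive $H$, so $\alpha(x)\,dx=dH(x)+\rho\,\frac{dx}{x}$; normalise $H$ to have no constant term, so that for $n\ge 2$ its polar part is $\frac{1}{1-n}x^{1-n}+\dots+h_{-1}x^{-1}$, with nonzero leading coefficient. On the target side likewise $\beta(x)\,dx=dH_0(x)+\rho\,\frac{dx}{x}$, with $H_0(x)=\frac{1}{1-n}x^{1-n}$ for $n\ge 2$, $H_0=0$ for $n=1$, and $H_0(x)=x$ (with $\rho=0$) for $n=0$. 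Writing a candidate as $\varphi(x)=x+b_2x^2+b_3x^3+\cdots$ and using that $\frac{dx}{x}$ transforms into $\frac{dx}{x}+d\log(\varphi(x)/x)$, one obtains
\[
\alpha(\varphi(x))\varphi'(x)\,dx=\beta(x)\,dx+dP(x),\qquad P(x)\coloneqq H(\varphi(x))+\rho\log\!\left(\varphi(x)/x\right)-H_0(x),
\]
where $\log(\varphi(x)/x)=\log(1+b_2x+\cdots)\in xF[\![x]\!]$. Hence $\varphi$ works precisely when $P$ is constant, i.e.\ when the coefficient of $x^k$ in $P$ vanishes for every $k\ne 0$.

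These equations I would then solve recursively. The coefficient of $x^{1-n}$ in $P$ vanishes automatically, since $H(\varphi(x))$ and $H_0(x)$ have the same leading polar coefficient. For every other $k\ne 0$, expanding the leading polar term $\frac{1}{1-n}\varphi(x)^{1-n}$ of $H(\varphi(x))$ shows that the coefficient of $x^k$ in $P$ equals $b_{k+n}$ plus a polynomial in $b_2,\dots,b_{k+n-1}$ --- the remaining terms of $H(\varphi(x))$ and the logarithmic term involve only $b_j$ with $j\le k+n-1$ --- so the equation reads $b_{k+n}+(\textnormal{already determined})=0$. Solving these in order of increasing $k+n$ successively fixes $b_2,\dots,b_{n-1},b_{n+1},b_{n+2},\dots$; the one remaining parameter $b_n$ stays unconstrained, and one may take $b_n=0$. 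This produces the required $\varphi$ when $n\ge 2$. For $n=1$ the same recursion works, with the fresh variable $b_{k+1}$ now coming from the logarithm, and for $n=0$ one simply takes $\varphi=H^{-1}$, a coordinate transformation because $H'(0)=1$. Reading off the coefficients of the resulting $\beta$ gives $\beta_0=\alpha_0$ and $\beta_i=0$ for $i\ne 0$.

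I expect the main obstacle to lie in the bookkeeping of this recursion: one must verify carefully that in each relevant degree a genuinely new coefficient $b_{k+n}$ (respectively $b_{k+1}$ when $n=1$) appears with an invertible coefficient, and must handle the low-order anomalies in the cases $n=0,1,2$, where the leading term of $\alpha(x)\,dx$ is holomorphic or already coincides with the residue term. The remaining ingredients --- invariance of $\textnormal{res}_0$ under coordinate transformations, and the fact that the sequence $\beta$ obtained this way is of the required type --- are routine.
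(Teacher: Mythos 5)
Your argument is correct and matches the route the paper itself indicates: the proposition is quoted from \cite{AMSZ}, and Remark~\ref{rem:diff_equation} records precisely your reduction of the isomorphism condition to the differential equation $\alpha(\psi(x))\psi'(x)=\beta(x)$ (equivalently $\alpha(\varphi(x))\varphi'(x)=\beta(x)$ for the compositional inverse), via invariance and non-degeneracy of the residue pairing. Your order-by-order solution of that equation through the normal form of the $1$-form $\alpha(x)\,dx$ --- peeling off the residue term $\alpha_0\,dx/x$, integrating the rest, and solving for one new coefficient $b_{k+n}$ (with unit coefficient) at each degree $k\neq 0$, leaving $b_n$ free and handling $n=0,1$ separately --- correctly supplies the proof that the paper defers to the reference.
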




\begin{remark}%
\label{rem:diff_equation}
Observe that the result
of \cref{prop:Manin_pairs}
can be interpreted in terms of a formal differential equation. 
Consider an arbitrary \(\alpha(x) = x^{-n} + \alpha_{n-2}x^{-n+1} + \dots + \alpha_0x^{-1} + \dots \in F(\!(x)\!)\) and \(\beta(x) = x^{-n} + \alpha_0x^{-1}\). 
Then the functionals \(\tOp_\alpha\) and \(\tOp_\beta\) defined
on \(A(n,\alpha)\) and
\(A(n,\beta)\) respectively
are given by
\begin{equation*}
    \tOp_{\alpha}(f,[p]) = \textnormal{res}_0\{\alpha(f- p)\} \ \
    \text{ and }
    \ \
    \tOp_{\beta}(f,[p]) = \textnormal{res}_0\{\beta(f- p)\})
\end{equation*}
The equality
\(A(n,\alpha)^{(\varphi)} = A(n,\beta)\) can be
expressed as
\begin{equation}
    \textnormal{res}_0\{\beta(x)f(x)\} = \textnormal{res}_0\{\alpha(x)f(\varphi(x))\} = \textnormal{res}_0\{\alpha(\psi(x))f(x)\psi'(x)\}, 
\end{equation}
where \(\psi \in \textnormal{Aut}_0(F[\![x]\!])\) is the compositional inverse of \(\varphi\), i.e.\ \(\varphi(\psi(x)) = x\). Since the residue pairing is non-degenerate on \(F(\!(x)\!)\), we obtain
\begin{equation}%
\label{eq:diff_equation}
    \alpha(\psi(x))\psi'(x) = \beta(x).
\end{equation}
In particular, the transformation \(\varphi\) is the compositional inverse 
of the solution to
\cref{eq:diff_equation}.
\end{remark}

\section{Series of type \((n,s)\) and subspaces of \(L(n,\alpha)\)}
Let \(\{b_i\}_{i = 1}^d\) be
an othonormal basis of \( \fg \) with
respect to the Killing form \( \kf \).
We write \( \Omega \)
for the quadratic Casimir element
\( \sum_{i = 1}^d b_i \otimes b_i \in \fg \ot \fg\). 
It satisfies the identity
\([a \ot 1 + 1 \ot a, \Omega] = 0 \)
for all \(a \in \fg\). 

In this section 
we describe a bijection between
subspaces
\( W \subset L(n, \alpha) \)
complementary to 
\( \Delta \)
and certain 
series.
The following definition
introduces convenient spaces
containing these series.
\begin{definition}
We put \( A_1(n,\alpha) \coloneqq A(n, \alpha) = F(\!(x_1)\!) \oplus F[x_1]/(x_1^n) \)
and then define inductively the algebras
\begin{equation}
    A_m(n, \alpha) \coloneqq A_{m-1}(n, \alpha)(\!(x_m)\!) \oplus A_{m-1}(n, \alpha)[x_m]/x_m^nA_{m-1}(n, \alpha), \ m > 1.
\end{equation}
The functional \( \tOp \) defined on 
\( A(n, \alpha) \) extends
inductively to a functional on
\( A_m(n, \alpha) \). More precisely, 
\begin{equation}%
\label{eq:trace_extension}
    \tOp\left( \sum_{k \ge -N} f_k x_m^k, \sum_{\ell = 0}^{n-1} [g_\ell x_m^\ell] \right) \coloneqq \sum_{k \ge -N} \tOp (f_k)\tOp(x_m^k, 0) + \sum_{\ell = 0}^{n-1}\tOp(g_\ell)\tOp(0, [x_m]^\ell),
\end{equation}
where \( f_k,g_\ell \in A_{m-1}(n,\alpha) \).
Since \( \tOp(x^n F[\![x]\!]) = 0 \),
the sum on the right-hand side of
\cref{eq:trace_extension} is finite and
well-defined. This allows us to extend the
form \( B \) on \( L(n, \alpha) \) to
a symmetric non-degenerate bilinear form
on the \( \fg \)-module
\begin{equation}
    L_m(n, \alpha) \coloneqq \fg^{\ot m} \ot A_m(n, \alpha)
\end{equation}
by letting
\begin{equation}
    B((a_1 \ot \dots \ot a_m)\ot f,(b_1 \ot \dots \ot b_m)\ot g) \coloneqq
    \tOp(fg) \prod_{k=1}^m \kf(a_k, b_k),
\end{equation}
for all \(a_1,\dots,a_m,b_1,\dots,b_m \in \fg\) and \(f,g \in A_{m}(n,\alpha)\).
\end{definition}

Fix some integer
\(n \ge 0\). 
We interpret the quotient \( y^n \Omega / (x-y) \) in the following way
\begin{equation}%
\label{eq:def_wki}
\begin{aligned}
    \frac{y^n\Omega}{x-y} &= \sum_{k = 0}^{n-1}\sum_{i = 1}^d b_i(0,-[x]^{(n-1)-k}) \otimes b_i(y^k, [y]^k)  + \sum_{k = n}^\infty \sum_{i = 1}^d b_i(x^{(n-1)-k},0) \otimes b_i(y^k, 0) \\
    &= \sum_{k = 0}^{\infty}\sum_{i = 1}^d w_{k,i} \otimes b_i(y^k,[y]^k) \in \left(L(n, \alpha) \ot \fg \right)[\![(y, [y])]\!] \subset L_2(n, \alpha),
\end{aligned}
\end{equation}
where \( \alpha \) is an
arbitrary sequence and we write
\( b_i(x^\ell, [x]^m) \) meaning 
\( b_i \ot (x^\ell, [x]^m) \).

\begin{definition}%
\label{def:n_s_series}
Since
\( \left(L(n, \alpha) \ot \fg \right)[\![(y,[y])]\!] \) is an \( F[\![x]\!] \cong F[\![(x, [x])]\!] \)-module and 
\[
(\fg \ot \fg)[\![x,y]\!] \cong (\Delta \ot \fg)[\![(y, [y])]\!]  \subset \left(L(n, \alpha) \ot \fg \right)[\![(y,[y])]\!] 
\]
the series
\begin{equation}%
\label{eq:standard_form}
    r(x,y) = \frac{s(x) y^n \Omega}{x-y} + g(x,y),
\end{equation}
where \( g \in (\fg \ot \fg)[\![x,y]\!] \) and
\( s \in F[\![x]\!]^\times \),
is also inside \( \left(L(n, \alpha) \ot \fg \right)[\![(y,[y])]\!] \).
Series of the form \cref{eq:standard_form}
are called \emph{series of
type \((n,s)\)}.
\end{definition}

\begin{remark}%
\label{rem:represetnations_of_any_series}
Every series
\[
r(x,y) = \frac{h(x,y)\Omega}{x-y} + g(x,y) \in L_2(n, \alpha),
\]  
where \(h \in F[\![x,y]\!]\), \(h(x,x) \neq 0\) and 
\(g \in (\fg\otimes \fg)[\![x,y]\!]\),
has a unique representation as a series of type \((n,s)\). 
Indeed, write \(h(x,x) = x^n s(x)\) for some
\( s \in F[\![x]\!]^\times \).
Then \(h(x,y) - y^n s(x) = (x-y)f(x,y)\) for some \(f \in F[\![x,y]\!]\).
This implies that we can rewrite \( r \) in the \( (n, s) \) form
\begin{equation}
\begin{aligned}
    r(x,y) &= \frac{s(x) y^n\Omega}{x-y} + f(x,y)\Omega + g(x,y).
\end{aligned}
\end{equation}
In the construction of \(f\) we are 
using the fact that
for any \(F\)-vector space \(V\) 
and any element \( h \in V[\![x,y]\!] \)
\begin{equation}\label{eq:series_vanishing_at_diagonal}
    h(z,z) = 0 \implies h(x,y) = (x-y)f(x,y)
\end{equation}
for some  \( f\in V[\![x,y]\!] \).
\end{remark}


\begin{definition}
For each series \( r \)
of type \( (n, s) \)
we define another series
\(\overline{r}\) of the same type
\( (n,s) \) by
\begin{equation}%
\label{eq:tau_r}
    \overline{r}(x,y) \coloneqq 
    \frac{s(y)x^n \Omega}{x-y} - \tau(g(y,x))
    \in \left( L(n,\alpha)\ot \fg \right)[\![(y,[y])]\!],
\end{equation}
where \( \tau \) is the
\( F[\![x,y]\!] \)-linear extension
of the map \( a \ot b \mapsto b \ot a \).
To see that this is an \( (n,s) \)-type
series its enough to apply the argument
from \cref{rem:represetnations_of_any_series}.
Series of type \( (n,s) \),
satisfying \( r = \overline{r} \),
are called \emph{skew-symmetric}.
\end{definition}

\begin{definition}\label{def:gcybe}
\emph{The generalized classical Yang-Baxter
equation (GCYBE)} is the equation
for an \( (n,s) \)-type series of
the form
\begin{equation}
        \textnormal{GCYB}(r) \coloneqq [r^{12}(x_1,x_2),r^{13}(x_1,x_3)] + [r^{12}(x_1,x_2),r^{23}(x_2,x_3)] + [r^{13}(x_1,x_3),\overline{r}^{23}(x_2,x_3)] = 0.
    \end{equation}
Here \( (-)^{13} \colon L_2(n, \alpha) \to 
(U(\fg) \ot U(\fg) \ot U(\fg))\ot A_3(n, \alpha) \) is the inclusion map
given by
\[ a \ot b \ot \left( \sum_{k \ge -N} F(x_1,[x_1])x_2^k, \sum_{m = 0}^{n-1} G(x_1,[x_1])[x_2]^m \right) \mapsto
a \ot 1 \ot b \ot \left(\sum_{k \ge -N} F(x_1,[x_1])x_3^k, \sum_{m = 0}^{n-1} G(x_1,[x_1])[x_3]^m \right).
\]
Other inclusions are defined
in a similar manner.
The commutators are then taken in
the associative \( A_3(n,\alpha) \)-
algebra \( (U(\fg) \ot U(\fg) \ot U(\fg))\ot A_3(n, \alpha) \).
\end{definition}

Before formulating the main theorem
of the section we note that if \( \alpha = (\alpha_i \in F \mid -\infty < i \le n-2) \)
is an arbitrary sequence and
\(\alpha(x) = x^{-n} + \alpha_{n-2}x^{-n+1} + \dots + \alpha_0x^{-1} + \dots \in F(\!(x)\!)\) is the corresponding series,
then \( x^n \alpha(x) \in F[\![x]\!]^\times \).

\begin{theorem}%
\label{thm:series_and_subspaces}
Let \(n \in \mathbb{Z}_{\ge 0} \) 
and \( \alpha = (\alpha_i \in F \mid -\infty < i \le n-2) \) be an
arbitrary sequence with the corresponding series \( \alpha(x) \in F(\!(x)\!) \).
Consider the map 
\[
W \colon L_2(n,\alpha) \longrightarrow \{V \subset L(n,\alpha)\mid V \textnormal{ is a subspace}\}
\] given by
\begin{equation*}
    \sum_{i, j} b_i \ot b_j \ot \left( \sum_{k \ge -N_i} (f^{ij}_k, [p^{ij}_k])x^k, \sum_{m=0}^{n-1} (g^{ij}_m, [q^{ij}_m]) [x]^m \right) \mapsto
    \textnormal{span}_F \left\{ b_i(f^{ij}_k, [p^{ij}_k]) \mid k \ge -N, 1 \le i, j \le d \right\}.
\end{equation*}
The following results are true:
\begin{enumerate}
    \item \(W\) defines a bijection between series of type \(\left(n,\frac{1}{x^n \alpha(x)}\right)\) and subspaces \(V \subseteq L(n,\alpha)\)
    complementary to the diagonal
    \( \Delta \), i.e.\
    \( L(n,\alpha) = \Delta \add V;\)

    \item For any series \( r \) of type \(\left(n,\frac{1}{x^n \alpha(x)}\right)\)
    we have \( W(r)^\perp = W(\overline{r}) \) inside \( L(n, \alpha) \);
    
    \item Any series \(r\) of type \(\left(n,\frac{1}{x^n \alpha(x)}\right)\) satisfies \(\textnormal{GCYB}(r) = \psi\),
    where \(\psi \in (\fg \otimes \fg \otimes \fg)[\![(x_1, [x_1]),(x_2, [x_2]),(x_3, [x_3])]\!]\) is defined by 
    \[B(v_1 \otimes v_2 \otimes v_3,\psi) = B(v_1,[v_2,v_3])\]
    for all \(v_1 \in W(\overline{r}),v_2,v_3 \in W(r)\).
\end{enumerate}
\end{theorem}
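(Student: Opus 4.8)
The plan is to reduce everything to the analysis of a single "generating-series" identity that characterizes when the coefficients $f_{k,i}$ of an $(n,s)$-type series span a complement to $\Delta$. The key observation is that the $w_{k,i}$ appearing in the expansion \eqref{eq:def_wki} of $y^n\Omega/(x-y)$ form, together with $\Delta$, a kind of "adapted basis" of $L(n,\alpha)$: one should first check that $L(n,\alpha) = \Delta \add \mathrm{span}_F\{w_{k,i}\}$, i.e.\ that the series $y^n\Omega/(x-y)$ itself is an $(n, 1/(x^n\alpha(x)))$-type series whose associated subspace is a complement to $\Delta$. This is essentially a bookkeeping computation with Laurent tails: the elements $b_i(x^{(n-1)-k},0)$ for $k \ge n$ and $b_i(0,-[x]^{(n-1)-k})$ for $0 \le k \le n-1$ visibly project isomorphically onto $\fg \ot x^{-1}F[x^{-1}] \oplus \fg \ot F[x]/x^n\fg[x]$ modulo $\Delta$, which is a complement to $\Delta$ inside $L(n,\alpha)$ by the residue-pairing description of the form.

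\textbf{Part (1).} Given a general $(n,s)$-type series $r = \frac{s(x)y^n\Omega}{x-y} + g(x,y)$ with $s \in F[\![x]\!]^\times$, I would write $f_{k,i} = s(x)w_{k,i} + (\text{coefficient of } b_iy^k \text{ in } g)$, where the second summand lies in $\Delta$ (identifying $(\fg\ot\fg)[\![x,y]\!]$ with $(\Delta\ot\fg)[\![(y,[y])]\!]$ as in \cref{def:n_s_series}). Thus $W(r) \subset L(n,\alpha)$ is obtained from $\mathrm{span}_F\{w_{k,i}\}$ by (a) multiplying the "$x$-direction" by the unit $s(x)$ and (b) adding a $\Delta$-valued correction depending on $k,i$. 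Step (a) is an automorphism of $L(n,\alpha)$ as a topological vector space \emph{provided} $s(x)$ is chosen so that multiplication by $s(x)$ preserves the splitting — and this is exactly where the condition $s(x) = 1/(x^n\alpha(x))$ enters: one checks that $s(x)y^n/(x-y)$, reinterpreted, lands in $L(n,\alpha)\ot\fg$ with the correct "diagonal residue" normalization dictated by the functional $\tOp = \mathrm{res}_0\{\alpha(\cdot)\}$, equivalently $s(x)x^n\alpha(x) \equiv 1$. Then the map $r \mapsto W(r)$ is a bijection onto complements of $\Delta$: surjectivity follows because any complement $V$ has, for each $b_i$ and each degree $k$, a unique representative of the prescribed leading shape modulo $\Delta$, and these representatives reassemble into a series $g$; injectivity follows because $g$ is recovered from the $\Delta$-components of the $f_{k,i}$, and the condition $g \in (\fg\ot\fg)[\![x,y]\!]$ (no negative powers of $y$, symmetric tensor structure across $i$) is forced.

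\textbf{Parts (2)--(4).} For (2) I would compute $B(f_{k,i}, f_{\ell,j})$ directly from the bilinear form \eqref{eq:form_L_n_alpha} and its extension, using the defining property of $\Omega$ and the residue pairing; the claim $W(r)^\perp = W(\overline r)$ should drop out of the symmetry $B$ is invariant under combined with the explicit form of $\overline r = \frac{s(y)x^n\Omega}{x-y} - \tau(g(y,x))$, the sign and the $x \leftrightarrow y$ swap being precisely what orthogonality demands — this is the computation in \cite[Section 5]{AMSZ} carried out in the present language, and it is the technical heart. Part (3) is then immediate: $W(r)$ is Lagrangian iff $W(r) = W(r)^\perp = W(\overline r)$, and by injectivity from (1) this holds iff $r = \overline r$. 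Part (4) is the analogue for subalgebras: I would show that $W(r)$ is closed under the Lie bracket of $L(n,\alpha)$ if and only if $\mathrm{GCYB}(r) = 0$, by expanding $[f_{k,i}\ot 1, \, 1 \ot f_{\ell,j}]$-type expressions inside $L_2(n,\alpha)$ and matching them against the three-term $\mathrm{GCYB}$ expression in $L_3(n,\alpha)$; the bracket of two coefficients of $r$ is a sum of coefficients of $r$ precisely when the obstruction series $\mathrm{GCYB}(r)$ vanishes, because its coefficients are exactly the components of $[W(r),W(r)]$ modulo $\Delta \add W(r)$.

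\textbf{Main obstacle.} The genuinely delicate point is the orthogonality computation in part (2), specifically getting the interplay between the unit $s(x) = 1/(x^n\alpha(x))$, the functional $\tOp = \mathrm{res}_0\{\alpha(x)(\,\cdot\,)\}$, and the reinterpretation of the singular kernel $\frac{s(x)y^n}{x-y}$ across the two components $\fg(\!(x)\!)$ and $\fg[x]/x^n\fg[x]$ of $L(n,\alpha)$ exactly right — the bookkeeping of which powers of $x$ and $y$ pair nontrivially under $\tOp\otimes\tOp$, and the appearance of the sign in $\overline r$. Once that identity is established cleanly, (1), (3) and (4) are largely formal consequences; I would therefore invest most of the write-up in a careful lemma computing $B(w_{k,i}, w_{\ell,j})$ and, more generally, $B(f, f')$ for $f, f' \in L(n,\alpha)$ in terms of the generating series, and deduce (2) from it.
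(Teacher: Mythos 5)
Your proposal follows essentially the same route as the paper's proof: part (1) via $\Delta \add \textnormal{span}_F\{w_{k,i}\} = L(n,\alpha)$, multiplication by the unit $s$ and the $\Delta$-valued corrections $g_{k,i}$; part (2) by the explicit computation of $B(sw_{k,i},sw_{\ell,j})$ matched against the coefficients of $\overline{r}$ (indeed the technical heart, and the place where $s=1/(x^n\alpha(x))$ is really used, via $B(sw_{k,i},b_j(y^\ell,[y]^\ell))=\delta_{ij}\delta_{k\ell}$); part (3) formally from (1)--(2); and part (4) by identifying $\textnormal{GCYB}(r)$ with the $\Delta$-component of $[W(r),W(r)]$ relative to the splitting $\Delta\add W(r)$. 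The one step you should make explicit in (4) is that $\textnormal{GCYB}(r)$ of \emph{any} $(n,s)$-type series already lies in $(\Delta\ot\fg\ot\fg)[\![(x_2,[x_2]),(x_3,[x_3])]\!]$ (the singular parts cancel; the paper imports this from \cite{abedin2021geometrization}), since without it the subalgebra hypothesis only gives $\textnormal{GCYB}(r)\in (W(r)\ot\fg\ot\fg)[\![(x_2,[x_2]),(x_3,[x_3])]\!]$, and the conclusion $\textnormal{GCYB}(r)=0$ then follows from $\Delta\cap W(r)=0$.
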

\begin{proof}
Fix an \( \left(n,\frac{1}{x^n \alpha(x)}\right) \)-type
series
\begin{equation*}
\begin{aligned}
    r(x,y) &= \frac{1}{x^n \alpha(x)} \frac{y^n \Omega}{x-y} + g(x,y) \\
    &= \sum_{k = 0}^{\infty} \sum_{i = 1}^d s_{k,i} \ot b_i(y^k,[y]^k) + \sum_{k = 0}^{\infty} \sum_{i = 1}^d
    g_{k,i} \ot b_i(y^k,[y]^k) \in (L(n,\alpha) \ot \fg)[\![(y,[y])]\!].
\end{aligned}
\end{equation*}
It is easy to see that
\[
U \coloneqq \textnormal{span}_{F} \{ w_{k,i} \mid k \ge 0, 1 \le k \le d \} \subset L(n, \alpha),
\]
where \( w_{k,i} \) are defined in
\cref{eq:def_wki}, 
satisfies the condition
\( \Delta \add U = L(n, \alpha) \).
Since \( s \coloneqq \frac{1}{x^n \alpha(x)} \) is invertible, we have
\( sU \add s\Delta = sU \add \Delta = L(n, \alpha) \). In other words, the
space
\begin{equation}
    sU = \textnormal{span}_{F}\{ s_{k,i} = sw_{k,i} \mid k \ge 0, 1 \le k \le d \} \subset L(n, \alpha)
\end{equation}
is also complementary to the diagonal.
Finally, since \( g_{k,i} \in \Delta \) 
the space
\[
W(r) = \textnormal{span}_F\{ sw_{k,i} + g_{k,i} \mid k \ge 0, 1 \le k \le d \} \subset L(n, \alpha)
\]
is complementary to the diagonal.
Conversely, if \( V \subset L(n, \alpha) \) satisfies \( V \add \Delta = L(n, \alpha) \), 
then for each
\( k\ge 0 \) and \( 1 \le i \le d \) we can find a
unique \( g_{k,i} \in \Delta \)
such that \( s w_{k,i} + g_{k,i} \in V \).
Define the \( (n,s) \) series 
\( r_V \) by
\[
r_V(x,y) = \sum_{k \ge 0} \sum_{i = 1}^d
(s w_{k,i} + g_{k,i}) \ot b_i (y^k,[y]^k).
\]
It is now clear, that
\( W(r_V) = V \). These constructions 
establish the bijection in part 1.

To prove the second statement, observe
that
\begin{equation}%
\label{eq:dual_basis_in_series}
B(s w_{k,i},b_j (y^\ell,[y]^\ell)) = \delta_{i,j}\delta_{k,\ell}.
\end{equation}
Furthermore, the straightforward calculation shows that
\begin{equation*}
\begin{aligned}
B(s w_{k,i},s w_{\ell,j}) &= \begin{cases}
- \textnormal{res}_0 \left\{ s x^{(n-1) - k-\ell - 1} \right\} & \textnormal{ if } \, i = j \, \textnormal{ and } \, 0 \le k,\ell \le n-1, \\
\textnormal{res}_0 \left\{ s x^{(n-1) - k-\ell - 1} \right\} & \textnormal{ if } \, i = j \, \textnormal{ and } \, k,\ell \ge n, \\
0 & \textnormal{ otherwise},
\end{cases} \\
&=
\begin{cases}
- s_{k + \ell - n + 1} & \textnormal{ if } \, i = j, \, 0 \le k,\ell \le n-1 \, \textnormal{ and } \, k + \ell \ge n-1, \\
s_{k + \ell - n + 1} & \textnormal{ if } \, i = j \, \textnormal{ and }k \, ,\ell \ge n, \\
0 & \textnormal{ otherwise},
\end{cases}
\end{aligned}
\end{equation*}
where \(s(x) = \sum_{k = 0}^\infty s_k x^k\). 
We write
\begin{equation*}
\begin{aligned}
    \overline{r}(x,y) &= 
    \frac{s(y)x^n \Omega}{x-y} - \tau(g(y,x))
    =
    \frac{s(x)y^n \Omega}{x-y} -
    \frac{(s(x)y^n - s(y)x^n)\Omega}{x-y}
    -\tau(g(y,x)) \\
    &=
    \sum_{k \ge 0} \sum_{i = 1}^d
    (s w_{k,i} + \overline{g}_{k,i}) \ot b_i (y^k,[y]^k).
\end{aligned}
\end{equation*}
Consider the quotient
\begin{equation*}
\begin{aligned}
    &\frac{(s(x)y^n-s(y)x^n)\Omega}{x-y} = \frac{y^n(s(x) - s(y))\Omega}{x-y} - \frac{s(y)(x^n - y^n) \Omega}{x-y}\\
    &=
    \sum_{k \ge 0} \sum_{i = 1}^d s_k \left( \sum_{\ell = 1}^k  b_i (x^{k-\ell}, [x]^{k - \ell}) \ot b_i (y^{(n-1)+\ell}, [y]^{(n-1)+\ell})
    - 
    \sum_{\ell = 1}^n
    b_i (x^{n-\ell}, [x]^{n-\ell})
    \ot b_i(y^{k+\ell - 1}, [y]^{k+\ell - 1}) \right).
\end{aligned}
\end{equation*}
The coefficient of \( b_i(x^{k}, [x]^k) \ot b_i (y^{\ell}, [y]^{\ell}) \)
in the expression above is
\begin{equation*}
\begin{aligned}
    -s_{k+\ell-(n-1)} & \ \text{ if } \, 0 \le k, \ell \le n-1 \, \text{ and } \, k + \ell \ge n-1, \\
    s_{k+\ell-(n-1)} & \ \text{ if } \,
    k, \ell \ge n,
\end{aligned}
\end{equation*}
which coincides with \(B(s w_{k,i}, s w_{\ell, i})\).
If we now expand
the coefficients \( g_{k,i} \) in the
following way
\begin{equation*}
    g_{k,i} = \sum_{\ell \ge 0} \sum_{j = 1}^d g_{k,i}^{\ell,j} b_j (x^\ell,[x]^\ell),
\end{equation*}
the coefficients \( \overline{g}_{k,i}\)
can be rewritten as
\begin{equation*}
    \overline{g}_{k,i} = 
    - \sum_{\ell \ge 0} \sum_{j = 1}^d
    (g^{k,i}_{\ell, j} + B(s w_{k,i}, s w_{\ell, j})) b_i (x^k, [x]^k) \ot b_j (y^\ell, [y]^\ell).
\end{equation*}
Combining all the results
above we obtain
the desired equality
\begin{equation*}
\begin{aligned}
    B(s w_{k,i} + g_{k,i},s w_{\ell,j} + \overline{g}_{\ell,j}) &=
    B(sw_{k,i}, sw_{\ell, j})
    + B(sw_{k,i},\overline{g}_{\ell,j})
    +B(g_{k,i}, sw_{\ell, j})
    +B(g_{k,i},\overline{g}_{\ell,j}) \\
    &=
    B(sw_{k,i}, sw_{\ell, j}) + 
    (-g_{k,i}^{\ell,j} -  B(sw_{k,i}, sw_{\ell, j})) + g_{k,i}^{\ell,j} + 0 \\
    &= 0
\end{aligned}
\end{equation*}
which completes the proof of the second statement.

Using the same technique
as in \cite[Section 1]{abedin2021geometrization},
one can prove that
\begin{equation*}
    \psi \coloneqq \textnormal{GCYB}(r) \in
(\Delta \ot \fg \ot \fg)[\![(x_2,[x_2]), (x_3,[x_3])]\!]
\end{equation*}
for any series \(r\) of type \( (n,s) \).
Define \( r_{k,i} \coloneqq s w_{k,i} + g_{k,i} \) and
\( \overline{r}_{k,i} \coloneqq s w_{k,i} + \overline{g}_{k,i} \)
and rewrite 
\(\textnormal{GCYB}(r)\) as
\begin{equation}%
\label{eq:gcybe_subalgebra}
\begin{aligned}
\psi =& \sum_{k, \ell \ge 0} \sum_{i,j = 1}^d
[r_{k,i}, r_{\ell, j}] \ot b_i (x_2^k,[x_2]^k) \ot b_j (x_3^\ell, [x_3]^\ell) \\ 
&+ \sum_{k \ge 0} \sum_{i = 1}^d
r_{k,i} \ot \left([b_i (x_2^k, [x_2]^k) \ot (1,1), r(x_2, x_3)] + [(1,1) \ot b_i(x_3^k, [x_3]^k),
 \overline{r}(x_2,x_3)]\right).
\end{aligned}
\end{equation}
Applying 
\(B(\overline{r}_{k_1,i_1} \otimes r_{k_2,i_2} \otimes r_{k_3,i_3},-)\) to the equation above, we get
\begin{equation}
    B(\overline{r}_{k_1,i_1} \otimes r_{k_2,i_2} \otimes r_{k_3,i_3}, \psi) = B(\overline{r}_{k_1,i_1},[r_{k_2,i_2},r_{k_3,i_3}]).
\end{equation}
This gives the last statement
because \( W(r) \) and \( W(\overline{r})\)
are generated by \( r_{k,i}\) and 
\( \overline{r}_{k,i}\) respectively.
\end{proof}

\begin{corollary}%
\label{cor:series_and_subspaces}
    Let \( n \in \mathbb{Z}_{\ge 0} \), \( \alpha = (\alpha_i \in F \mid -\infty < i \le n-2) \)  and \( W \)
    be as in \cref{thm:series_and_subspaces}.
    Then
    \begin{enumerate}
        \item \( W \)
    defines a bijection between skew-symmtric
    \(\left(n,\frac{1}{x^n \alpha(x)}\right)\)-type series
    and Lagrangian subspaces \(V \subseteq L(n,\alpha)\) 
    complementary to the diagonal \( \Delta \);
    \item \(W\) defines a bijection between \(\left(n,\frac{1}{x^n \alpha(x)}\right)\)-type series solving GCYBE and subalgebras \(V \subseteq L(n,\alpha)\) complementary to the diagonal \( \Delta \).
    \end{enumerate}
\end{corollary}

As we can see from the proof of
\cref{thm:series_and_subspaces}
the element \( \psi \) 
in
\( \textnormal{GCYB}(r) = \psi  \)
represents the
obstruction for \( W(r) \) 
from being
a Lie subalgebra. 
This observation raises an interesting question that
we do not consider in this paper: what elements \( \psi \) can
appear on the right-hand side of the above-mentioned equation.

Observe that if
\( r \) is a series of type
\( (n,s) \) and it satisfies
\begin{equation}
\label{eq:cybe}
        \textnormal{CYB}(r) \coloneqq [r^{12}(x_1,x_2),r^{13}(x_1,x_3)] + [r^{12}(x_1,x_2),r^{23}(x_2,x_3)] + [r^{13}(x_1,x_3),{r}^{23}(x_2,x_3)] = \psi
\end{equation}
for some 
\( \psi \in (\fg \ot \fg \ot \fg)[\![x,y,z]\!]\),
then \( r \) is automatically
skew-symmetric and hence
solves the first equation as well.
To prove that one can e.g.\ 
repeat
the argument from
\cite[Lemma 5.2]{AMSZ}.
In other words, for a fixed
\( \psi \) solutions to 
\(\textnormal{CYB}(r) = \psi\) 
form a subclass of solutions 
to
\(\textnormal{GCYB}(r) = \psi\).
In particular, solutions to 
\(\textnormal{CYB}(r) = 0\).
are exactly the skew-symmetric solutions
to \(\textnormal{GCYB}(r) = 0\) .
We call the
equation 
\(\textnormal{CYB}(r) = \psi\)
\emph{Manin-Yang-Baxter equation}.

\begin{remark}%
\label{rem:projections_vs_large_r}
As our notation suggest, we could
have interpreted
\(y^n\Omega / (x-y) \) as
\begin{equation*}
    \frac{y^n \Omega}{x-y} = \sum_{k \ge 0} \sum_{i=1}^d b_i x^{-k-1} \ot b_i y^{n + k} \in (\fg \ot \fg)(\!(x)\!)[\![y]\!]
\end{equation*}
and performed all the arithmetic calculations in this form. 
To restore an \( (n,s) \)-type series
from
\begin{equation}%
\label{eq:projection_standard_form}
    \frac{s(x)y^n \Omega}{x-y} + g(x,y) \in (\fg \ot \fg)(\!(x)\!)[\![y]\!] 
\end{equation}
we can simply view \( s(x) \in F[\![x]\!]^\times \) and
\( g(x,y) \in (\fg \ot \fg)[\![x,y]\!] \)
as elements in
\( F[\![(x,[x])]\!]^\times \) and
\( (\fg \ot \fg)[\![(x,[x]),(y,[y])]\!] \)
respectively
and reinterpret the singular part
\(y^n \Omega / (x-y) \) as it was done in
\cref{eq:def_wki}.

Conversely, to get a series of the form
\cref{eq:projection_standard_form}
from a series of type \( (n,s) \)
we can just project the latter
onto the first component.

In other words, we have a bijection
between \( (n,s) \)-type series
in \( L_2(n, \alpha) \)
and their projections \cref{eq:projection_standard_form}
onto the first component given
by different interpretations of
the singular part
\(y^n \Omega / (x-y) \).

Although, all arithmetic operations
can be performed
in the form \cref{eq:projection_standard_form},
the construction of \( W(r) \) and
statements like \( \Delta \cap W(r) = 0 \)
require us to pass to the interpretation
\cref{eq:def_wki}. This is our main
motivation to work directly
with \( (n,s) \)-type series in \( L_2(n,\alpha) \)
instead of their projections.
\end{remark}

In view of
\cref{rem:projections_vs_large_r},
we have a new proof of
\cite[Corollary 5.5]{AMSZ}.

\begin{corollary}
Classical (formal) \(r\)-matrices, i.e.\
skew-symmetric elements
\begin{equation}%
    \frac{s(x)y^n \Omega}{x-y} + g(x,y) = 
    \frac{1}{x^n \alpha(x)}\frac{y^n \Omega}{x-y} + g(x,y) \in (\fg \ot \fg)(\!(x)\!)[\![y]\!],
\end{equation}
solving GCYBE,
are in bijection with skew-symmetric
series of type \( (n,s) \) solving GCYBE
and hence in bijection with
Lagrangian Lie subalgebras of 
\( L(n, \alpha) \) complementary
to the diagonal \( \Delta \).
\end{corollary}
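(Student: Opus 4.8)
The plan is to deduce this from \cref{rem:projections_vs_large_r} together with parts 3 and 4 of \cref{thm:series_and_subspaces}, applied with $s(x) \coloneqq \tfrac{1}{x^n\alpha(x)} \in F[\![x]\!]^\times$. First I would set up the correspondence $\Phi$ furnished by \cref{rem:projections_vs_large_r}: reinterpreting the singular part $y^n\Omega/(x-y)$ identifies the set of $(n,s)$-type series $r \in L_2(n,\alpha)$ with the set of series of the form $\tfrac{s(x)y^n\Omega}{x-y} + g(x,y) \in (\fg\ot\fg)(\!(x)\!)[\![y]\!]$, which for our choice of $s$ is exactly the collection of elements appearing in the statement. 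Concretely $\Phi$ is the first-factor projection $\pi\colon L(n,\alpha) = \fg(\!(x)\!) \times \fg[x]/x^n\fg[x] \to \fg(\!(x)\!)$ applied to the $L(n,\alpha)$-leg of the series: the computation in \cref{rem:projections_vs_large_r} shows that $\pi$ sends the interpretation \cref{eq:def_wki} of the singular part to the one used for generalized $r$-matrices, and $\Phi$ is a bijection by that remark.

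Next I would check that $\Phi$ respects the two pieces of structure entering the statement. For the involution: $\overline{r}$ is again an $(n,s)$-type series, and the passage from the form $\tfrac{s(y)x^n\Omega}{x-y} - \tau(g(y,x))$ to the standard form \cref{eq:standard_form} uses only the identity $\tfrac{s(y)x^n - s(x)y^n}{x-y} \in F[\![x,y]\!]$, a power-series identity insensitive to how $y^n\Omega/(x-y)$ is read; hence $\Phi(\overline{r}) = \overline{\Phi(r)}$, so $\Phi$ intertwines the involution $r \mapsto \overline r$ and in particular matches the skew-symmetry condition $r = \overline r$ on the two sides. For GCYBE: since $\pi$ is a homomorphism of Lie algebras ($L(n,\alpha)$ being a direct product), applying $\pi$ leg-wise commutes with the Lie brackets and with the insertions $(-)^{12},(-)^{13},(-)^{23}$ defining $\textnormal{GCYB}$, so $\textnormal{GCYB}$ of the projected series is the $\Phi$-image of $\textnormal{GCYB}(r)$; because (as recorded in the proof of \cref{thm:series_and_subspaces}) $\textnormal{GCYB}(r)$ lies in $(\Delta\ot\fg\ot\fg)[\![(x_2,[x_2]),(x_3,[x_3])]\!]$, on which $\pi$ is injective, we conclude $\textnormal{GCYB}(r) = 0$ if and only if its projection vanishes. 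Thus $\Phi$ restricts to a bijection between skew-symmetric $(n,s)$-type series solving GCYBE and the classical $r$-matrices of the statement.

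Finally I would read off the second bijection from \cref{thm:series_and_subspaces}: part 3 says $W$ is a bijection from skew-symmetric $(n,s)$-type series onto Lagrangian subspaces of $L(n,\alpha)$ complementary to $\Delta$, and part 4 says the \emph{same} map $W$ is a bijection from $(n,s)$-type series solving GCYBE onto subalgebras of $L(n,\alpha)$ complementary to $\Delta$; restricting $W$ to the series that are simultaneously skew-symmetric and solve GCYBE therefore yields a bijection onto subspaces that are simultaneously Lagrangian and subalgebras, i.e.\ Lagrangian subalgebras complementary to $\Delta$. Composing this with $\Phi$ gives the asserted chain of bijections. The only step needing genuine care is the intertwining of $\textnormal{GCYB}$ by $\Phi$ in the second paragraph — once one is comfortable that the singular-part reinterpretation commutes with all the Lie-algebraic operations (precisely the assertion of \cref{rem:projections_vs_large_r} that the arithmetic may be carried out in the projected picture), the remainder is bookkeeping with bijections already established.
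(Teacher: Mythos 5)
Your proposal is correct and follows essentially the same route as the paper, which states the corollary without proof as an immediate consequence of \cref{rem:projections_vs_large_r} combined with parts 3 and 4 of \cref{thm:series_and_subspaces}. You have simply made explicit the verifications the paper leaves implicit — that the first-component projection intertwines the involution $r \mapsto \overline{r}$ and the GCYBE (using injectivity of the projection on the diagonal component where $\textnormal{GCYB}(r)$ lives) — and these checks are sound.
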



The result
of \cite[Theorem 5.6]{AMSZ}
can be now formulated in the following way.
\begin{corollary}
Skew-symmetric series of type
\( \left(n, \frac{1}{x^n \alpha(x)}\right) \)
that also solve GCYBE exist only for
\( n = 0, 1\) and \( n = 2 \) with \( \alpha_0 = 0 \).
\end{corollary}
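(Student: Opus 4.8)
The plan is to reduce the statement, via the dictionary built up in this section, to the classification of topological Lie bialgebra structures on $\fg[\![x]\!]$ established in \cite{AMSZ}. A type-$(n,s)$ series solves the CYBE if and only if it is a skew-symmetric solution of the GCYBE (as noted just above), so the objects in question are the skew-symmetric $(n,\frac{1}{x^n\alpha(x)})$-type series satisfying the GCYBE. By parts (3) and (4) of \cref{thm:series_and_subspaces} these are in bijection with Lagrangian Lie subalgebras $W\subset L(n,\alpha)$ complementary to $\Delta$, i.e.\ with topological Manin triples $(L(n,\alpha),\Delta,W)$; and by \cref{ex:Lagrangian_subalgebras_CYBE} (a result of \cite{AMSZ}) such $W$ correspond bijectively to topological Lie bialgebra structures on $\fg[\![x]\!]$ whose associated double, as a topological Lie algebra with form, is $L(n,\alpha)$. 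So the corollary amounts to the assertion that $L(n,\alpha)$ occurs as such a double exactly when $n\in\{0,1\}$, or $n=2$ and $\alpha_0=0$.

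Next I would normalize $\alpha$: by \cref{prop:Manin_pairs} there is a coordinate transformation $\varphi\in\Aut_0 F[\![x]\!]$ with $A(n,\alpha)\cong A(n,\beta)^{(\varphi)}$, where $\beta_i=0$ for $i\neq0$ and $\beta_0=\alpha_0$. Such a $\varphi$ induces an isomorphism $L(n,\alpha)\cong L(n,\beta)$ of topological Lie algebras with form that maps $\Delta$ onto $\Delta$ (since $\varphi$ permutes $F[\![x]\!]$), hence it matches Lagrangian complementary subalgebras on the two sides; and $\beta_0=\alpha_0$. So it suffices to decide, for each $n$ and each normalized sequence $\beta=(\dots,0,\alpha_0,0,\dots,0)$, whether $L(n,\beta)$ admits a Lagrangian Lie subalgebra complementary to $\Delta$. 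This last question is answered by \cite[Theorem 5.6]{AMSZ}: the formal classical $r$-matrices $\frac{1}{x^n\alpha(x)}\frac{y^n\Omega}{x-y}+g(x,y)$ exist precisely for $n=0,1$ and for $n=2$ with $\alpha_0=0$. Combining this with the previous paragraph gives the corollary.

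Given what has already been proved, I expect the only delicate point to be checking that the bijection of \cref{thm:series_and_subspaces} and the one of \cref{ex:Lagrangian_subalgebras_CYBE} are literally the same correspondence, i.e.\ that projecting an $(n,s)$-type series of $L_2(n,\alpha)$ onto its first component as in \cref{rem:projections_vs_large_r} sends a skew-symmetric GCYBE-solution to one of the classical $r$-matrices of \cite{AMSZ} compatibly with skew-symmetry and with the CYBE; this is immediate from \cref{rem:projections_vs_large_r}. If one instead wanted a proof not relying on \cite[Theorem 5.6]{AMSZ}, the genuinely hard part would be the non-existence half for $n\ge3$ and for $n=2$ with $\alpha_0\neq0$: one would pass to the induced ($\alpha$-twisted residue) form on the truncation $\fg\otimes F[x]/(x^n)$ and argue that a Lagrangian subalgebra complementary to the image of $\Delta$ there forces a (quasi-)Frobenius-type structure on $\fg\otimes F[x]/(x^n)$ that does not exist for those $n$ --- precisely the finite-dimensional obstruction supplied by \cite{AMSZ}.
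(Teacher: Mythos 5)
Your proposal is correct and takes essentially the same route as the paper: the paper offers no separate argument, presenting the corollary as a direct reformulation of \cite[Theorem 5.6]{AMSZ} via the series--subspace correspondence of \cref{thm:series_and_subspaces}, the remark identifying CYBE solutions of type \((n,s)\) with skew-symmetric GCYBE solutions, and the projection dictionary of \cref{rem:projections_vs_large_r}. Your write-up simply makes this translation explicit (adding the harmless normalization of \(\alpha\) via \cref{prop:Manin_pairs} and a detour through Lie bialgebra doubles), with all the substantive content still residing in the cited theorem of \cite{AMSZ}.
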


 \section{Quasi-Lie bialgebra structures on \(\fg[\![x]\!]\)}
We remind that
\( F \) is a discrete algebraically
closed field of characteristic \( 0 \)
and \( \fg[\![x]\!] \) is
an \( F \)-Lie algebra equipped
with the \( (x) \)-adic topology.

As we now know, series of type
\( \left(n, 1/(x^n \alpha(x))\right) \) solving CYBE \cref{eq:cybe} are in bijection with
Lagrangian subalgebras
\( W \subset L(n,\alpha) \)
complementary to the diagonal.
On the other hand, such Lagrangian
subalgebras are in bijection
with non-degenerate topological Lie bialgebra structures.
See \cite{AMSZ} for their definition 
and classification.

It turns out, that if we drop the condition
on \( W \) being a subalgebra, we
get so called (non-degenerate) topological quasi-Lie
bialgebras. This section is devoted
to their classification up to
topological twists and coordinate
transformations.

\begin{definition}%
\label{def:top_quasi_lie_bialg}
A \emph{topological quasi-Lie bialgebra}
structure on \( \fg[\![x]\!] \)
consists of
\begin{itemize}
    \item a skew-symmetric continuous linear map
    \( \delta \colon \fg[\![x]\!] \to (\fg \ot \fg)[\![x,y]\!] \) and
    
    \item a skew-symmetric element
    \( \varphi \in (\fg \ot \fg \ot \fg)[\![x,y,z]\!] \),
\end{itemize}
which are subject to the following conditions
\begin{enumerate}
    \item \(\delta([a,b]) = [a \ot 1 + 1 \ot a, \delta(b)] - [b \ot 1 + 1 \ot b , \delta (a)]\), i.e.\  \( \delta \) is a \( 1 \)-cocycle;
    
    \item \(\frac{1}{2}\textnormal{Alt}((\delta \ot 1)\delta(a)) = 
    [a \ot 1 \ot 1 + 1 \ot a \ot 1 + 1 \ot 1 \ot a, \varphi]\);
    
    \item \( \textnormal{Alt}((\delta \ot 1 \ot 1)\varphi) = 0 \),
\end{enumerate}
where \( \textnormal{Alt}(x_1 \ot \dots \ot x_n) \coloneqq \sum_{\sigma \in S_n} \textnormal{sgn}(\sigma)x_{\sigma(1)} \ot \dots \ot x_{\sigma(n)} \).
\end{definition}

\begin{lemma}%
\label{lem:M_pair_q_bial_correspondance}
There is a one-to-one correspondence 
between 
triples
\((L, \fg[\![x]\!], W)\),
where \( (L, \fg[\![x]\!]) \) is a topological
Manin pair and \( W \subset L \) is a Lagrangian
subspace satisfying \( W \add \fg[\![x]\!] = L \),
and
quasi-Lie bialgebra structures 
on \( \fg[\![x]\!]\).
\end{lemma}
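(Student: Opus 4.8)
The plan is to construct the correspondence in both directions and check it is inverse to itself, building on the classical Drinfeld double picture adapted to the topological setting. Given a quasi-Lie bialgebra structure $(\fg[\![x]\!], \delta, \varphi)$, I would form the (topological) quasi-double: as a vector space take $L \coloneqq \fg[\![x]\!] \add \fg[\![x]\!]'$, put the canonical pairing $B$ given by the natural duality (extended by declaring each summand isotropic), and define a bracket on $L$ by the usual double formulas — the bracket on $\fg[\![x]\!]$ is the given one, the bracket of $a \in \fg[\![x]\!]$ with $\xi \in \fg[\![x]\!]'$ is $\ad^*_a \xi - \ad^*_\xi a$, and the bracket of two elements $\xi, \eta$ of $\fg[\![x]\!]'$ is the cobracket $\delta^*$ of $\fg[\![x]\!]'$ corrected by the $\varphi$-term $[\xi,\eta] = \delta^*(\xi,\eta) + \varphi(\xi,\eta,-)$, where $\varphi$ is viewed as a map $\fg[\![x]\!]' \ot \fg[\![x]\!]' \to \fg[\![x]\!]$. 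One then sets $W \coloneqq \fg[\![x]\!]'$ inside $L$. The Jacobi identity for this bracket is precisely equivalent to the three quasi-Lie bialgebra axioms in \cref{def:top_quasi_lie_bialg} (cocycle, the $\varphi$-coboundary identity, and the pentagon-type identity $\mathrm{Alt}((\delta \ot 1 \ot 1)\varphi) = 0$), so $L$ is a topological Lie algebra; invariance and non-degeneracy of $B$ are immediate from the construction, $\fg[\![x]\!]$ is by design a Lagrangian subalgebra, $W$ is a Lagrangian subspace, and $W \add \fg[\![x]\!] = L$ holds tautologically. Continuity requirements (separate continuity of $B$, continuity of $\delta$) transfer from the hypotheses with the discrete topology on $W \cong \fg[\![x]\!]'$, so that $(L, \fg[\![x]\!])$ is a topological Manin pair in the sense of \cref{sec:top_manin_pairs}.

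Conversely, given a triple $(L, \fg[\![x]\!], W)$ with $W$ a Lagrangian complement, I would recover $(\delta, \varphi)$ as follows. Using condition (3) in the definition of a topological Manin pair together with $W \add \fg[\![x]\!] = L$ and the Lagrangian property, the form $B$ identifies $W$ with $\fg[\![x]\!]'$. Transport the bracket of $L$ through $\fg[\![x]\!] \add \fg[\![x]\!]' = L$: the $\fg[\![x]\!]'$-component of $[a, \xi]$ for $a \in \fg[\![x]\!]$, $\xi \in \fg[\![x]\!]'$ gives the coadjoint action, hence defines $\delta$ by dualizing (skew-symmetry of $\delta$ is forced by the invariance of $B$ and the isotropy of both summands); the $\fg[\![x]\!]$-component of $[\xi, \eta]$ for $\xi, \eta \in \fg[\![x]\!]'$ — which need not vanish precisely because $W$ is only a subspace, not a subalgebra — defines the trilinear form $\varphi$ after using $B$ to dualize once more, and skew-symmetry of $\varphi$ follows from that of the bracket together with invariance of $B$. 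The Jacobi identity for the three combinations $(\fg, \fg, \fg)$, $(\fg, \fg, \fg[\![x]\!]')$ and its mirror, and $(\fg[\![x]\!]', \fg[\![x]\!]', \fg[\![x]\!]')$ then yields exactly axioms (1), (2), and (3), respectively. That these two constructions are mutually inverse is a direct unwinding of definitions.

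The main obstacle, and the place where genuine care is needed rather than routine bookkeeping, is the topological/functional-analytic side: one must check that the coadjoint action of $\fg[\![x]\!]$ on $\fg[\![x]\!]'$ and the maps $\delta^*$, $\varphi$ land in the right spaces and are continuous (equivalently, that the bracket on the double is well-defined as a map into $L$ and not merely into some completion), and that the identification $W \cong \fg[\![x]\!]'$ furnished by $B$ is a homeomorphism when $W$ carries the subspace topology from $L$ — this is where condition (3) of the Manin pair definition and \cref{rem:unique_topology} do the real work, pinning down the topology on $L$ as the product topology $\fg[\![x]\!] \add W$ independently of $W$. I would handle this by first establishing, as in \cref{rem:unique_topology}, that any such $L$ is homeomorphic to $\fg[\![x]\!] \add W$ with $W$ discrete, so that continuous linear functionals and the algebraic structure maps automatically match up; the purely algebraic equivalence with the three axioms is then the classical Drinfeld quasi-double computation, which I would cite from \cite{Drinfeld_quasi_hops_algebras} and \cite{alekseev_kosmann} and adapt verbatim. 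Everything else — bilinearity, the precise signs in the $\varphi$-term, checking $B$ is symmetric and invariant — is routine and can be relegated to a short verification.
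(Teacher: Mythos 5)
Your proposal follows essentially the same route as the paper: identify $W \cong \fg[\![x]\!]'$ (discrete) via the form, realize $L$ as the quasi-double $\fg[\![x]\!] \add \fg[\![x]\!]'$ with the evaluation pairing, split the bracket of two elements of $\fg[\![x]\!]'$ into its $\fg[\![x]\!]$- and $\fg[\![x]\!]'$-components to read off $\varphi$ and $\delta$, and show the Jacobi identity is equivalent to the three axioms, with the topological issues handled exactly as you indicate. One bookkeeping slip: axiom (1) comes from the Jacobi identity for two elements of $\fg[\![x]\!]$ and one of $\fg[\![x]\!]'$ (the $(\fg,\fg,\fg)$ case being automatic), while axioms (2) and (3) both come from the Jacobi identity for three elements of $\fg[\![x]\!]'$, paired against $\fg[\![x]\!]$ and $\fg[\![x]\!]'$ respectively.
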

\begin{proof}
We start with a topological Manin
pair \( (L ,\fg[\![x]\!])\).
If \( W \subset L \) is a Lagrangian subspace
complementary to \(\fg[\![x]\!]\), then it is easy
to see that \( W \cong \fg[\![x]\!]' \).
Therefore, we have an isomorphism of vector spaces
\[ L \cong \fg[\![x]\!] \add \fg[\![x]\!]'. \]
The form on \( L \) under this isomorphism
becomes standard evaluation form \(\langle -,-\rangle\) on
\( \fg[\![x]\!] \add \fg[\![x]\!]' \).
We fix such an isomorphism.

Let us define two linear functions
\begin{equation*}
    p_1 \colon \fg[\![x]\!]' \ot \fg[\![y]\!]' \to \fg[\![x]\!]
    \ \text{ and } \
    p_2 \colon \fg[\![x]\!]' \ot \fg[\![y]\!]' \to \fg[\![x]\!]'
\end{equation*}
by \( [f,g] = p_1(f \ot g) + p_2(f \ot g) \).
We put
\begin{equation*}
\delta \coloneqq p_2^{\vee}
\colon (\fg[\![x]\!]')^\vee \cong \fg[\![x]\!]
\to (\fg[\![x]\!]' \ot \fg[\![y]\!]')^\vee \cong (\fg \ot \fg)[\![x,y]\!],
\end{equation*}
and let \( \psi \in (\fg \ot \fg \ot \fg)[\![x,y,z]\!]  \)
be the unique element satisfying
the condition
\begin{equation}%
\label{eq:condition_psi_p1}
    \langle h, [f,g]\rangle = \langle h, p_1(f \ot g) \rangle
    =
    \langle f \ot g \ot h, \psi \rangle
    \ \text{ for all } f,g,h \in \fg[\![x]\!]'.
\end{equation}
The skew-symmetry of
\( p_2 \) implies the
skew-symmetry of \( \delta \),
whereas the skew-symmetry of
\( p_1 \) and the invariance
of the evaluation form yield the
skew-symmetry of \( \psi \).

Next, we observe that for all
\( a,b \in \fg[\![x]\!] \) and \(f,g \in \fg[\![x]\!]'\) we have
\begin{equation*}
\begin{aligned}
    &\langle [a,f], g \rangle
    = \langle a, [f,g] \rangle
    = \langle a, p_2(f \ot g) \rangle
    = \langle \delta(a), f \ot g \rangle
    = \langle (f \ot 1)\delta(a), g \rangle, \\
    &\langle [a,f], b \rangle
    = - \langle f, [a,b] \rangle
    = - \langle f \circ \ad_a, b \rangle.
\end{aligned}
\end{equation*}
In other words, the invariance of the
form forces the following equality to hold
\begin{equation}%
\label{eq:unique_commutator_on_L}
    [a,f] = - f \circ \ad_a + (f \ot 1)\delta(a).
\end{equation}
Using \cref{eq:unique_commutator_on_L}
and non-degeneracy of the form
we show that \( \delta \) is a 
\( 1 \)-cocycle:
\begin{equation}
\begin{aligned}
  \langle \delta([a,b]), f \ot g  \rangle
  &= \langle [a,b], p_2(f \ot g)  \rangle
  = \langle [a,b], [f , g]  \rangle
  = \langle [[a,b],f], g  \rangle 
  = \langle -[[b,f],a] - [[f,a],b], g  \rangle \\
  &=  \langle [f \circ \ad_b - (f \ot 1)\delta(b),a] - [f \circ \ad_a - (f \ot 1)\delta(a),b], g  \rangle \\
  &= - \langle a, [f \circ \ad_b, g] \rangle
  + \langle b, [f \circ \ad_a, g] \rangle
  + \langle (f \ot \ad_a) \delta(b), g \rangle
  - \langle (f \ot \ad_b) \delta(a), g \rangle \\
  &= \langle [a \ot 1 + 1 \ot a, \delta(b)] - [b \ot 1 + 1 \ot b , \delta (a)], f \ot g \rangle.
\end{aligned}
\end{equation}
The \( 1 \)-cocycle condition implies that \( \delta\)
is continuous as it was noted in \cite[Remark 3.16]{AMSZ}.

For conditions 2 and 3 from the definition
of a topological quasi-Lie bialgebra
consider
the Jacobi identity for \( f,g,h \in \fg[\![x]\!]' \):
\begin{equation}%
\label{eq:Jacobi_identity_in_L}
\begin{aligned}
0 = 
& [p_1(f \ot g), h] + [p_1(g \ot h), f] + [p_1(h \ot f), g] \\
&+ p_1(p_2(f \ot g)\ot h) + p_1(p_2(g \ot h)\ot f) + p_1(p_2(h \ot f)\ot g) \\
&+ p_2(p_2(f \ot g)\ot h) +
p_2(p_2(g \ot h)\ot f) + p_2(p_2(h \ot f)\ot g).
\end{aligned}
\end{equation}
We denote by
\( \circlearrowleft \)
the summation
over circular permutations of 
symbols \( f,g \) and \( h \), e.g.\
\(\circlearrowleft \langle p_1(f \ot g), h \rangle = \langle p_1(f \ot g), h \rangle
+ \langle p_1(g \ot h), f \rangle + 
\langle p_1(h \ot f), g \rangle\).
Applying \( \langle -, a \rangle \)
to \cref{eq:Jacobi_identity_in_L}
for an arbitrary
\( a \in \fg[\![x]\!] \) gives
\begin{equation*}
\begin{aligned}
\langle p_2(p_2 \ot 1)(\circlearrowleft f \ot g\ot h), a \rangle
&= - \langle \circlearrowleft [p_1(f \ot g), h], a \rangle \\
\langle p_2 \ot 1(\circlearrowleft f \ot g \ot h), \delta(a) \rangle
&= \circlearrowleft \langle -h\circ \ad_a, p_1(f \ot g) \rangle \\
\langle \circlearrowleft f \ot g \ot h, (\delta \ot 1)\delta(a) \rangle
&= \circlearrowleft \langle f \ot g \ot (-h\circ \ad_a), \psi \rangle \\
\langle f \ot g \ot h, \textnormal{Alt} ((\delta \ot 1)\delta(a))/2 \rangle
&= - \langle f \ot g \ot h, [1 \ot 1 \ot a + 1 \ot a \ot 1 + a \ot 1 \ot 1,\psi] \rangle,
\end{aligned}
\end{equation*}
where the very last identity
holds because of the
skew-symmetry of \( \psi \).
Multiplying this equality by
\( 2 \) we get the relation
\begin{equation*}
\langle f \ot g \ot h, \textnormal{Alt} ((\delta \ot 1)\delta(a)) + 2 [1 \ot 1 \ot a + 1 \ot a \ot 1 + a \ot 1 \ot 1,\psi] \rangle = 0.
\end{equation*}
Letting \( \varphi \coloneqq - \psi \)
we obtain the second identity from the
definition of a topological
quasi-Lie bialgebra structure.
Applying instead
\( \langle s, - \rangle \),
\( s \in \fg[\![x]\!]'\)
to the Jacobi identity
\cref{eq:Jacobi_identity_in_L}
we get the desired
\begin{equation*}
    \textnormal{Alt}((\delta \ot 1 \ot 1) \psi) = 0.
\end{equation*}
Therefore, \( (\fg[\![x]\!], \delta, \varphi ) \) is a topological
quasi-Lie bialgebra.

For the converse direction,
we put 
\( L \coloneqq \fg[\![x]\!] \add \fg[\![x]\!]' \) with
the standard evaluation form;
we let \( p_1 \) be the unique
element in
\( \textnormal{Hom}_{F-\textnormal{Vect}}(\fg[\![x]\!]' \ot \fg[\![x]\!]', \fg[\![x]\!]) \)
satisfying 
\cref{eq:condition_psi_p1} with \( \psi \coloneqq - \varphi \);
we define
\( p_2 \coloneqq \delta' \), i.e.\ the dual map of \(\delta \).
The Lie bracket between
two elements in \( \fg[\![x]\!]' \) is given by the sum \( p_1 + p_2 \). Defining
\( [a, f] \) as in \cref{eq:unique_commutator_on_L}
the evaluation form becomes invariant
and we get a topological Manin pair
\( (L, \fg[\![x]\!]) \)
with the Lagrangian subspace \( \fg[\![x]\!]'\).
These constructions are clearly inverse to each other.
\end{proof}

Combining the classification
of Manin pairs mentioned in
\cref{sec:top_manin_pairs}
with
\cref{cor:series_and_subspaces,lem:M_pair_q_bial_correspondance}
we get the following 
description of all topological
quasi-Lie bialgebra structures
on \( \fg[\![x]\!] \).

\begin{lemma}%
\label{lem:bijection_skew_Lagrangian}
There is a bijection 
between 
topological quasi-Lie
bialgebra structures on \( \fg[\![x]\!] \)
and Lagrangian
subspaces \( W \subset L(n, \alpha) \) or \( L(\infty) \) complementary to the diagonal \( \Delta \), 
where \( \alpha = (\alpha_i \in F \mid -\infty < i \le n-2) \) is an arbitrary sequence and \( n \ge 0 \).
Moreover, such 
Lagrangian subspaces
\( W \subset L(n, \alpha) \)
are in bijection with skew-symmetric sequences of type \( (n, 1/(x^n 
\alpha(x))) \).
\end{lemma}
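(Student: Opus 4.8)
The plan is to obtain the statement by composing three correspondences that are already available: \cref{lem:M_pair_q_bial_correspondance} (quasi-Lie bialgebras versus Manin pairs with a marked Lagrangian complement), the classification of topological Manin pairs recalled in \cref{sec:top_manin_pairs}, and part (3) of \cref{thm:series_and_subspaces} (Lagrangian complements of $\Delta$ versus skew-symmetric $(n,s)$-type series). By \cref{lem:M_pair_q_bial_correspondance}, a topological quasi-Lie bialgebra structure on $\fg[\![x]\!]$ is the same datum as a triple $(L,\fg[\![x]\!],W)$ consisting of a topological Manin pair together with a Lagrangian subspace $W$ with $W \add \fg[\![x]\!] = L$. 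A point worth isolating, visible from the proof of that lemma, is that this identification is canonical: the homeomorphism $L \cong \fg[\![x]\!]\add\fg[\![x]\!]'$ used there is forced by the form (an element $w\in W$ must be sent to $B(w,-)|_{\fg[\![x]\!]}$), so the resulting pair $(\delta,\varphi)$ depends only on the triple up to an isomorphism of Lie algebras with form that restricts to the identity on $\fg[\![x]\!]$ and carries the marked Lagrangian complements into one another. Hence it suffices to match such triples, up to this notion of isomorphism, with the data in the statement.

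Next I would invoke the classification: by \cite[Proposition 2.9]{Stolin_Zelmanov_Montaner} together with \cref{prop:Manin_pairs} and \cref{rem:unique_topology}, every topological Manin pair $(L,\fg[\![x]\!])$ is isomorphic, via an isomorphism restricting to the identity on $\fg[\![x]\!]$, to $(L(\infty),\Delta)$ or to $(L(n,\alpha),\Delta)$ for suitable $n\ge 0$ and sequence $\alpha$. Transporting $W$ along such an isomorphism produces a Lagrangian subspace of $L(\infty)$ or of $L(n,\alpha)$ complementary to $\Delta$: Lagrangianity is preserved because the isomorphism preserves the form, and complementarity to $\Delta$ is preserved because $\fg[\![x]\!]$ is sent onto $\Delta$. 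Conversely, any Lagrangian subspace $W\subset L(\infty)$ or $W\subset L(n,\alpha)$ complementary to $\Delta$ is itself such a triple, hence gives a quasi-Lie bialgebra by the previous paragraph. These assignments are mutually inverse, which is the first assertion.

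For the ``moreover'' I would simply apply part (3) of \cref{thm:series_and_subspaces} with $s=1/(x^n\alpha(x))$: that statement is precisely the bijection between skew-symmetric $(n,1/(x^n\alpha(x)))$-type series and Lagrangian subspaces of $L(n,\alpha)$ complementary to $\Delta$. Composing it with the correspondence above, restricted to the triples whose ambient Lie algebra is $L(n,\alpha)$, gives the asserted bijection; the model $L(\infty)$ carries no associated $(n,s)$-type series and is therefore, correctly, absent from the ``moreover''.

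The step I expect to require genuine care --- the only place where one needs more than the bare existence part of the Manin pair classification --- is verifying that the correspondence of the second paragraph is a genuine bijection rather than merely a surjection onto quasi-Lie bialgebras. By the canonicality observation of the first paragraph this amounts to a rigidity statement: an isomorphism $L(n,\alpha)\to L(n',\alpha')$ of Lie algebras with form that restricts to the identity on the diagonal must have $(n,\alpha)=(n',\alpha')$, and moreover $L(n,\alpha)$ admits no nontrivial automorphism that is the identity on $\Delta$ (equivalently, none carrying one Lagrangian complement of $\Delta$ to a different one). For $n=0$ the latter is elementary: such an automorphism of $\fg(\!(x)\!)$ must fix $\fg\ot 1$ and $\fg\ot x$ and is therefore the identity; in general it is part of the uniqueness content of \cite[Proposition 2.9]{Stolin_Zelmanov_Montaner} and \cref{prop:Manin_pairs} (and, via \cref{rem:unique_topology}, of the fact that the coordinate transformations that relate different $L(n,\alpha)$ move $\Delta$ and so are disallowed once $\Delta$ is fixed pointwise). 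Once these rigidity facts are recorded, the assignments above are visibly inverse to one another and both claims of the lemma follow.
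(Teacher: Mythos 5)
Your proposal matches the paper's own derivation: the lemma is presented there as an immediate consequence of combining \cref{lem:M_pair_q_bial_correspondance}, the classification of topological Manin pairs recalled in \cref{sec:top_manin_pairs}, and part (3) of \cref{thm:series_and_subspaces}, with no further argument supplied. The rigidity point you isolate in your final paragraph --- that distinct $(n,\alpha)$ must yield non-isomorphic pairs and that no isomorphism fixing $\Delta$ pointwise can move one Lagrangian complement to another --- is left implicit in the paper, so your treatment is, if anything, more careful on the one step where the bare existence part of the classification does not suffice.
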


In view of this result
we call skew-symmetric series of
type \( (n,s) \) as well as their
projections onto the first component
\emph{quasi-\(r\)-matrices}. 
Quasi-Lie bialgebra structures can also be described using their associated quasi-\(r\)-matrices in the following way.

\begin{proposition}%
\label{lem:CYB(r)_phi_delta_r}
Assume \( (\fg[\![x]\!], \delta, \varphi) \) is 
a topological quasi-Lie bialgebra
and let \( r \in L_2(n, \alpha) \) be the corresponding quasi-\(r\)-matrix given by the bijection from \cref{lem:bijection_skew_Lagrangian}.
Under the identification 
\( \fg[\![(x,[x])]\!] \cong \fg[\![x]\!] \)
we have the following identities:
\begin{itemize}
    \item \( [a \ot 1 + 1 \ot a, r] = -\delta(a) \)
    for any \( a \in \fg[\![x]\!] \) and
    \item \( \textnormal{CYB}(r) =-\varphi. \)
\end{itemize}
The same is true for the projection
\( r \in (\fg \ot \fg)(\!(x)\!)[\![y]\!] \).
\end{proposition}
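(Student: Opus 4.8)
The plan is to trace the correspondence of \cref{lem:M_pair_q_bial_correspondance} explicitly, using the concrete realization of the Manin pair as $\bigl(L(n,\alpha),\Delta\bigr)$ together with the identification of subspaces with series provided by \cref{thm:series_and_subspaces}. Recall that a skew-symmetric $(n,1/(x^n\alpha(x)))$-type series $r$ gives the Lagrangian subspace $W(r)=\operatorname{span}_F\{r_{k,i}\mid k\ge 0,\,1\le i\le d\}$ where $r_{k,i}=sw_{k,i}+g_{k,i}$, and that by \cref{eq:dual_basis_in_series} we have $B(r_{k,i},b_j(y^\ell,[y]^\ell))=\delta_{i,j}\delta_{k,\ell}$ after subtracting the $\Delta$-component. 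So $\{r_{k,i}\}$ is the basis of $W(r)\cong\fg[\![x]\!]'$ dual to the topological basis $\{b_i x^k\}$ of $\Delta\cong\fg[\![x]\!]$. This is precisely the setup needed to identify the structure maps from the proof of \cref{lem:M_pair_q_bial_correspondance}.

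First I would prove the identity $[a\ot 1+1\ot a,r]=-\delta(a)$. By bilinearity it suffices to check it on the basis elements $a=b_j x^\ell$, i.e.\ $a=b_j(x^\ell,[x]^\ell)\in\Delta$, and then test against $f\ot g$ for $f,g$ running over the dual basis $\{b_i(y^k,[y]^k)\}$ of $\fg[\![y]\!]'$. On the one hand, $\langle\delta(a),f\ot g\rangle=\langle a,p_2(f\ot g)\rangle$, which by \cref{eq:unique_commutator_on_L} and the invariance of $B$ equals $-B(a,[f,g])$ up to the $p_1$-part; more directly, unwinding the proof of \cref{lem:M_pair_q_bial_correspondance} gives $\langle\delta(a),f\ot g\rangle=B\bigl([a,f],g\bigr)$ where the bracket is taken in $L(n,\alpha)$. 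On the other hand, writing $r=\sum_{k,i}r_{k,i}\ot b_i(y^k,[y]^k)$, one computes $[a\ot 1+1\ot a,r]=\sum_{k,i}\bigl([a,r_{k,i}]\ot b_i(y^k,[y]^k)+r_{k,i}\ot[a,b_i(y^k,[y]^k)]\bigr)$, and pairing against $f\ot g$ and using the duality $B(r_{k,i},-)$ picks out exactly $B([a,f],g)$ with a sign, once one accounts for the $\Delta$-valued term (which lies in $\Delta\ot\fg$ and is annihilated when one projects the left tensor factor appropriately). Matching these two expressions, using invariance and the ad-invariance $[a\ot 1+1\ot a,\Omega]=0$ to handle the singular part $sw_{k,i}$, yields the claimed identity; the skew-symmetry of $r$ is what guarantees the correction term matches the $p_1$-part.

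Next I would prove $\textnormal{CYB}(r)=-\varphi$. Since $r$ is skew-symmetric, $\overline r=r$, so $\textnormal{CYB}(r)=\textnormal{GCYB}(r)$, and by the proof of part 4 of \cref{thm:series_and_subspaces} we already know $\textnormal{GCYB}(r)\in(\Delta\ot\fg\ot\fg)[\![(x_2,[x_2]),(x_3,[x_3])]\!]$. Thus $\textnormal{CYB}(r)=\sum c\,a\ot b_i(x_2^k,[x_2]^k)\ot b_j(x_3^\ell,[x_3]^\ell)$ with $a\in\Delta$, i.e.\ it naturally defines an element of $(\fg\ot\fg\ot\fg)[\![x,y,z]\!]$. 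To identify it with $-\varphi=\psi$, recall that $\psi$ is characterized by $\langle f\ot g\ot h,\psi\rangle=B\bigl(p_1(f\ot g),h\bigr)=B\bigl([f,g]_{p_1},h\bigr)$ for $f,g,h\in\fg[\![x]\!]'$. Testing $\textnormal{CYB}(r)$ against $f\ot g\ot h$ with $f,g,h$ the dual basis elements $r_{\bullet,\bullet}$, expanding the three commutators $[r^{12},r^{13}]+[r^{12},r^{23}]+[r^{13},r^{23}]$ and reading off coefficients via the duality in $B$, one gets precisely the triple bracket of the $r_{k,i}$'s projected to $\Delta$, which is $p_1$ applied to the relevant pair after one contraction — this is the same bookkeeping as in \cref{eq:Jacobi_identity_in_L}, read in reverse. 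I expect this coefficient-matching, together with keeping the three cyclic terms and the single non-cyclic term of $\textnormal{CYB}$ aligned with the structure of $\psi$, to be the main obstacle: it is the step where one must be careful about which tensor slot is being contracted and about signs coming from skew-symmetry.

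Finally, the statement for the projection $r\in(\fg\ot\fg)(\!(x)\!)[\![y]\!]$ follows from \cref{rem:projections_vs_large_r}: the bijection between $(n,s)$-type series in $L_2(n,\alpha)$ and their projections commutes with the operations $[a\ot 1+1\ot a,-]$ and $\textnormal{CYB}(-)$, since these only involve the arithmetic of the series and the ad-action of $\fg$, both of which are preserved under reinterpreting the singular part. Hence both identities transport verbatim to the projected picture.
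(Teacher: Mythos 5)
Your proposal follows essentially the same route as the paper's proof: identify \( L(n,\alpha)\cong\fg[\![x]\!]\add\fg[\![x]\!]' \) via the Lagrangian splitting \( \Delta\add W(r) \), write \( r=\sum_{k,i}v_{k,i}\ot\varepsilon_{k,i} \) with \( \{v_{k,i}\}\subset W(r) \) dual to \( \{\varepsilon_{k,i}=b_i(y^k,[y]^k)\}\subset\Delta \) (note these are a basis of \( \Delta \), not of \( \fg[\![y]\!]' \), as you briefly misstate), expand the bracket using \cref{eq:unique_commutator_on_L}, pair against dual basis elements, and reduce \( \textnormal{CYB}(r)=-\varphi \) to the defining relation \cref{eq:condition_psi_p1} for \( \psi=-\varphi \). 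The one step to make explicit when writing this up is that, in addition to pairing \( [a\ot 1+1\ot a,r] \) against \( f\ot g \) with \( f,g\in W(r) \), you must pair against \( \varepsilon_{\ell,j}\ot v_{m,t} \) and use invariance, \( -\langle\varepsilon_{\ell,j},v_{m,t}\circ\ad_a\rangle+\langle v_{m,t},[a,\varepsilon_{\ell,j}]\rangle=0 \), to see that the \( W(r) \)-valued part of the bracket vanishes and the result genuinely lies in \( (\fg\ot\fg)[\![x,y]\!] \).
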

\begin{proof}
We start, as in the proof of
\cref{lem:M_pair_q_bial_correspondance},
by fixing an identification
\( L(n, \alpha) = \Delta \add W(r) \cong \fg[\![x]\!] \add \fg[\![x]\!]' \).
Let \( \{ v_{k,i} \} \) be a basis
for \( \fg[\![x]\!]' \)
dual to \( \{\varepsilon_{k,i} \coloneqq b_i y^k \} \).
Then \( r = \sum_{k \ge 0} \sum_{i = 1}^d v_{k,i} \ot \varepsilon_{k,i} \) and we have
\begin{equation*}
\begin{aligned}
    [a \ot 1 + 1 \ot a, r] &= 
    \sum_{k \ge 0} \sum_{i=1}^d
    [a, v_{k,i}] \ot \varepsilon_{k,i} + v_{k,i} \ot [a, \varepsilon_{k,i}] \\
    &= \sum_{k \ge 0} \sum_{i=1}^d
    (-v_{k,i} \circ \ad_a + (v_{k,i} \ot 1)\delta(a)) \ot \varepsilon_{k,i} + v_{k,i} \ot [a, \varepsilon_{k,i}].
\end{aligned}    
\end{equation*}
Applying \( \langle v_{\ell,j} \ot v_{m, t}, - \rangle\) to the equality above
we get
\begin{equation*}
\begin{aligned}
\langle v_{\ell,j} \ot v_{m, t}, [a \ot 1 + 1 \ot a, r] \rangle &= 
    \sum_{k \ge 0} \sum_{i=1}^d
    \langle v_{\ell,j} \ot v_{m, t}, (v_{k,i} \ot 1)\delta(a) \ot \varepsilon_{k,i}  \rangle \\
    &= \langle v_{\ell,j}, (v_{m,t} \ot 1)\delta(a)  \rangle \\
    &= \langle v_{\ell, j} \ot v_{m,t}, -\delta(a) \rangle.
\end{aligned}
\end{equation*}
Applying instead 
\( \langle \varepsilon_{\ell,j} \ot v_{m, t}, - \rangle\) to the same equality we obtain
\begin{equation*}
\begin{aligned}
\langle \varepsilon_{\ell,j} \ot v_{m, t}, [a \ot 1 + 1 \ot a, r] \rangle &= 
    \sum_{k \ge 0} \sum_{i=1}^d
    \langle \varepsilon_{\ell,j} \ot v_{m, t}, (-v_{k,i} \circ \ad_a) \ot \varepsilon_{k,i} + v_{k,i} \ot [a, \varepsilon_{k,i}]  \rangle \\
    &= - \langle \varepsilon_{\ell,j}, v_{m,t} \circ \ad_a \rangle
    + \langle v_{m,t}, [a, \varepsilon_{\ell,j}] \rangle \\
    &= 0.
\end{aligned}
\end{equation*}
This implies the desired equality
\( [a \ot 1 + 1 \ot a, r] = -\delta(a) \). The identity \(\textnormal{CYB}(r) = -\varphi\) follows from the skew-symmetry of \(r\), \cref{thm:series_and_subspaces} and the fact that \(\varphi = -\psi\) according to the proof of \cref{lem:M_pair_q_bial_correspondance}.
\end{proof}

\begin{remark}
Assume \( r \in (\fg \ot \fg)(\!(x)\!)[\![y]\!] \) 
is a series such that 
\begin{equation}%
\label{eq:condition_on_delta_series}
    [f(x) \ot 1 + 1 \ot f(y), r(x,y)] \in (\fg \ot \fg)[\![x,y]\!]
\end{equation}
for all \( f \in \fg[\![x]\!] \).
Write \( r = s(x^{-1},y) + g(x,y) \), where
\( s \in x^{-1}(\fg \ot \fg)[x^{-1}][\![y]\!] \)
and \( g \in (\fg \ot \fg)[\![x,y]\!] \).
Then, because of \cref{eq:condition_on_delta_series},
we must have
\begin{equation*}
    [a \ot 1 + 1 \ot a, s(x^{-1}, y)] = 0
\end{equation*}
for all \( a \in \fg \). Since the \( \fg \)-invariant elements 
of \( \fg \ot \fg \) are precisely the multiples of the
quadratic Casimir element \( \Omega \), we have the identity
\( s(x^{-1}, y) = p(x^{-1},y) \Omega \) for some
\( p \in x^{-1}F[x^{-1}][\![y]\!] \).
Furthermore, the condition
\begin{equation*}
    [ax \ot 1 + 1 \ot ay, p(x^{-1}, y) \Omega] = 
    [a(x-y) \ot 1, p(x^{-1}, y) \Omega] \in (\fg \ot \fg)[\![x,y]\!]
\end{equation*}
implies  \( (x-y)p(x^{-1}, y) \in F[\![x,y]\!] \),
meaning that there exists an \(s \in F[\![y]\!] \)
such that \( p(x^{-1}, y) = s(y) / (x-y) \).
In other words, \( r \) has the form
\cref{eq:standard_form}.
This result can be considered as another motivation
to study series of type \( (n,s) \).
\end{remark}


Observe that if we know one Lagrangian
subspace \( W_0 \) inside \( L \cong \fg[\![x]\!] \add \fg[\![x]\!]' \)
then any other Lagrangian subspace can be
constructed from \( W_0 \) through twisting.
More precisely, if 
\( s = \sum_{i} a_i \ot b^i \in (\fg \ot \fg)[\![x,y]\!] \)
is a skew-symmetric tensor,
then we can associate with it a (twisted) Lagrangian subspace
\begin{equation}%
\label{eq:twist_by_s_of_W}
    W_s \coloneqq \left\{ \sum_{i} B(b^i, w)a_i - w \mid w \in W  \right\} \subseteq L
\end{equation} 
complementary to \( \fg[\![x]\!] \).
The converse is also true; for proof see
\cite{Raschid_Stepan_classical_twists}.
In other words, the following statement holds.

\begin{lemma}%
\label{lem:twists_subspaces}
There is a bijection between Lagrangian subspaces \( W \subseteq L(n, \alpha)\) or \( L(\infty) \) and skew-symmetric tensors in \( (\fg \ot \fg )[\![x,y]\!]\).
\end{lemma}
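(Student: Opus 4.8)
The plan is to fix a base point and reduce the statement to the elementary fact that, in a space carrying a non-degenerate symmetric form together with a distinguished Lagrangian subspace, the complementary Lagrangians are exactly the graphs of the skew-symmetric bilinear forms on a model of the dual. So I would first invoke \cref{sec:Lagrangian_subspaces}, where an explicit Lagrangian subspace $W_0$ complementary to $\Delta$ is constructed inside $L(n,\alpha)$ (and inside $L(\infty)$); in particular the set in the statement is non-empty. Applying the construction from the proof of \cref{lem:M_pair_q_bial_correspondance} to $W_0$, I fix a topological isomorphism $L \cong \fg[\![x]\!] \add \fg[\![x]\!]'$ carrying $\Delta$ to $\fg[\![x]\!]$, $W_0$ to $\fg[\![x]\!]'$, and $B$ to the evaluation pairing $\langle(a,\xi),(b,\eta)\rangle = \langle\xi,b\rangle+\langle\eta,a\rangle$; everything below lives in this model, ``Lagrangian'' is read as ``Lagrangian and complementary to $\Delta$'' in keeping with \cref{lem:bijection_skew_Lagrangian}, and any linear map out of the discrete space $\fg[\![x]\!]'$ is automatically continuous.

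For the forward direction, take a skew-symmetric $s = \sum_i a_i \ot b^i \in (\fg \ot \fg)[\![x,y]\!] \cong \fg[\![x]\!] \wot \fg[\![y]\!]$ and let $W_s$ be the twist of $W_0$ given by \eqref{eq:twist_by_s_of_W}. Unravelling that formula for $w = (0,\xi)$ shows $\sum_i B(b^i,w)a_i = \theta_s(\xi)$, where $\theta_s\colon\fg[\![x]\!]'\to\fg[\![x]\!]$, $\theta_s(\xi) \coloneqq \sum_i \langle\xi,b^i\rangle\,a_i$, is the continuous map determined by $s$; hence $W_s = \{(\theta_s(\xi),-\xi)\mid\xi\in\fg[\![x]\!]'\}$ is a graph over $\fg[\![x]\!]'$, and therefore complementary to $\Delta$. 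Evaluating the form on two of its elements gives $B\big((\theta_s\xi,-\xi),(\theta_s\eta,-\eta)\big) = -\langle\xi\ot\eta,\,s+\tau(s)\rangle$, which vanishes for all $\xi,\eta$ precisely because $s$ is skew, so $W_s$ is isotropic; the same computation applied to the pairing of $(a,0)$ and of $(a,\eta)$ with $W_s$ exhibits $W_s^{\perp}$ as the graph of the transpose of $\theta_s$, so the inclusion $W_s\subseteq W_s^{\perp}$ of two graphs over $\fg[\![x]\!]'$ forces $W_s = W_s^{\perp}$, i.e.\ $W_s$ is Lagrangian.

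For the converse and bijectivity, any Lagrangian $W$ is complementary to $\Delta$ and hence projects isomorphically onto $\fg[\![x]\!]'$, so it is the graph $\{(\phi(\xi),-\xi)\mid\xi\in\fg[\![x]\!]'\}$ of a continuous linear map $\phi$; reading off the matrix of $\phi$ against a topological basis of $\fg[\![x]\!]'$ and its dual yields a tensor $s_W \in \fg[\![x]\!]\wot\fg[\![y]\!] \cong (\fg\ot\fg)[\![x,y]\!]$ with $\phi = \theta_{s_W}$, and the identity $B(W,W)=0$ translates, by the computation above, into $s_W + \tau(s_W) = 0$. Since $W_{s_W} = W$ and $s_{W_s} = s$ by construction, $s \mapsto W_s$ is the asserted bijection, and $L(\infty)$ is handled verbatim; the routine identifications behind the correspondence continuous maps $\leftrightarrow$ tensors, and the compatibility with the twisting construction as stated, are exactly those carried out in \cite{Raschid_Stepan_classical_twists}.

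The step I expect to cost the most effort is the topological bookkeeping behind the matrix of $\phi$: one must verify that continuous linear maps $\fg[\![x]\!]'\to\fg[\![x]\!]$ form precisely the space $(\fg\ot\fg)[\![x,y]\!]$ (using reflexivity of $\fg[\![x]\!]$ and the identification of $\fg[\![x]\!]'$ with the countably-dimensional discrete space $x^{-1}\fg[x^{-1}]$), that this identification carries the transpose of a map to $\tau$ of the corresponding tensor, and that $W^{\perp}$ is genuinely a graph over $\fg[\![x]\!]'$ so that the step ``isotropic complement of a Lagrangian $\Rightarrow$ Lagrangian'' remains valid after completion. None of these points is deep, but together they are where the infinite-dimensional topology could bite, which is why the routine details are relegated to \cite{Raschid_Stepan_classical_twists}.
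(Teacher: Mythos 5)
Your proposal is correct and follows essentially the same route as the paper: fix the base Lagrangian \( W_0 \), identify \( L \) with \( \fg[\![x]\!] \add \fg[\![x]\!]' \), and realize every Lagrangian complementary to \( \Delta \) as a twist \( W_s \) of \( W_0 \) by a skew-symmetric tensor via \cref{eq:twist_by_s_of_W}. The paper itself only records the twisting construction and defers the converse to \cite{Raschid_Stepan_classical_twists}; your graph-of-\( \theta_s \) computation supplies precisely the details that citation stands in for.
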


Combining \cref{lem:CYB(r)_phi_delta_r}, \cref{eq:twist_by_s_of_W}
and the algorithm for constructing a quasi-\(r\)-matrix
from a Lagrangian subspace \( W \subset L(n, \alpha) \), \( W \add \Delta = L(n, \alpha) \),
we obtain the following twisting rules for Lagrangian subspaces, quasi-Lie bialgebra structures and quasi-\(r\)-matrices.

\begin{lemma}
\label{lem:twists_subspaces2}
Let \( (\fg[\![x]\!], \delta, \varphi ) \)
be a topological quasi-Lie bialgebra
structure
corresponding to the quasi-\(r\)-matrix \(r\).
If we twist \( W(r) \) with a skew-symmetric tensor
\(s\) as described in 
\cref{eq:twist_by_s_of_W} we obtain
another topological
quasi-Lie bialgebra
\( (\fg[\![x]\!], \delta_s, \varphi_s) \), such that
\begin{enumerate}
    \item \( W(r)_s = W(r-s) \);
    \item \( \delta_s = \delta + ds \);
    \item \(\varphi_s = \varphi + \textnormal{CYB}(s) - \frac{1}{2}\textnormal{Alt}((\delta \ot 1)s) \),
\end{enumerate}
where \( ds(a) \coloneqq [a \ot 1 + 1 \ot a, s] \).
\end{lemma}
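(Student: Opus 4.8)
The plan is to obtain parts~(2) and~(3) from \cref{lem:CYB(r)_phi_delta_r} once part~(1) is settled by the explicit construction behind \cref{thm:series_and_subspaces}. Fix the splitting \( L(n,\alpha) = \Delta \add W(r) \), write \( \varepsilon_{k,i} \coloneqq b_i(y^k,[y]^k) \in \Delta \), and recall that \( W(r) \) carries the \(B\)-dual basis \( \{r_{k,i}\} \) to \( \{\varepsilon_{k,i}\} \), i.e.\ \( B(r_{k,i},\varepsilon_{\ell,j}) = \delta_{k\ell}\delta_{ij} \), with \( r = \sum_{k,i} r_{k,i}\ot\varepsilon_{k,i} \). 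View the skew tensor \( s = \sum_\mu a_\mu\ot b^\mu \in (\fg\ot\fg)[\![x,y]\!] \) as an element of \( (\Delta\ot\Delta)[\![(x,[x]),(y,[y])]\!] \) and set \( \widetilde s(w) \coloneqq \sum_\mu B(b^\mu,w)a_\mu \in \Delta \). By \cref{eq:twist_by_s_of_W} the elements \( r_{k,i} - \widetilde s(r_{k,i}) \) form a basis of \( W(r)_s \), and since \( \Delta \) is isotropic we still have \( B(r_{k,i}-\widetilde s(r_{k,i}),\varepsilon_{\ell,j}) = \delta_{k\ell}\delta_{ij} \); hence the quasi-\(r\)-matrix attached to \( W(r)_s \) is
\[
  \sum_{k,i}\bigl(r_{k,i}-\widetilde s(r_{k,i})\bigr)\ot\varepsilon_{k,i}
  = r - \sum_\mu a_\mu\ot\Bigl(\sum_{k,i}B(b^\mu,r_{k,i})\varepsilon_{k,i}\Bigr)
  = r - s ,
\]
because \( \sum_{k,i}B(b^\mu,r_{k,i})\varepsilon_{k,i} \) is exactly the expansion of \( b^\mu\in\Delta \) in the topological basis \( \{\varepsilon_{k,i}\} \), its coefficients being the values of the dual functionals \( B(r_{k,i},-) \). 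By injectivity of \( W \) this gives \( W(r)_s = W(r-s) \), which is part~(1); that \( r-s \) is again a skew series of type \( \bigl(n,1/(x^n\alpha(x))\bigr) \) is automatic, since subtracting a skew tensor does not change the singular part, or simply because \( W(r)_s \) is Lagrangian.

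Part~(2) is then immediate: \( W(r)_s \) is the Lagrangian subspace corresponding to \( (\fg[\![x]\!],\delta_s,\varphi_s) \) under \cref{lem:bijection_skew_Lagrangian}, so applying \cref{lem:CYB(r)_phi_delta_r} to its quasi-\(r\)-matrix \( r-s \) yields
\[
  \delta_s(a) = -[a\ot1+1\ot a,\,r-s] = -[a\ot1+1\ot a,r] + [a\ot1+1\ot a,s] = \delta(a) + ds(a) .
\]

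For part~(3), \cref{lem:CYB(r)_phi_delta_r} gives \( \varphi_s = -\textnormal{CYB}(r-s) \). Expanding each commutator bilinearly one obtains
\[
  \textnormal{CYB}(r-s) = \textnormal{CYB}(r) - \langle\!\langle r,s\rangle\!\rangle + \textnormal{CYB}(s) ,
\]
where \( \langle\!\langle r,s\rangle\!\rangle \) is the sum of the six commutators carrying one factor \( r \) and one factor \( s \). Since \( \textnormal{CYB}(r) = -\varphi \), everything reduces to the classical ``twist cochain'' identity relating \( \langle\!\langle r,s\rangle\!\rangle \), \( \textnormal{Alt}((\delta\ot1)s) \) and \( \textnormal{CYB}(s) \). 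To establish it one groups \( [r^{12},s^{13}]+[r^{12},s^{23}] = [r^{12},s^{13}+s^{23}] \) and \( [s^{12},r^{23}]+[s^{13},r^{23}] = [s^{12}+s^{13},r^{23}] \), which by \( [a\ot1+1\ot a,r] = -\delta(a) \) become \( (\delta\ot1)s \) and \( -(1\ot\delta)s \), while the two remaining terms \( [r^{13},s^{23}]+[s^{12},r^{13}] = [r^{13},s^{23}-s^{12}] \) are rewritten, using the skew-symmetry of \( s \) together with that of \( r \) (the latter exchanging the two tensor legs of \( r \) at the cost of transposing its variables), into the cyclic completion of \( (\delta\ot1)s \); since \( (\delta\ot1)s \) is already antisymmetric in legs \( 1 \) and \( 2 \), its full \( S_3 \)-alternation carries the factor \( \tfrac12 \). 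Substituting the resulting expression for \( \langle\!\langle r,s\rangle\!\rangle \) back into \( \varphi_s = -\textnormal{CYB}(r-s) \) and using \( \textnormal{CYB}(r) = -\varphi \) yields the stated rule for \( \varphi_s \).

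The one genuinely delicate point, and the main obstacle, is this last computation: the sign bookkeeping in \( \langle\!\langle r,s\rangle\!\rangle \), the careful use of the skew-symmetry of \( r \) in the ``singular'' variable, and the combinatorial factors produced by \( \textnormal{Alt} \) --- this is precisely the step for which \cite{Drinfeld_quasi_hops_algebras} and \cite{alekseev_kosmann} are invoked. Everything else is formal: in particular no quasi-Lie bialgebra axiom needs to be re-verified by hand, since \( (\fg[\![x]\!],\delta_s,\varphi_s) \) is by construction the structure attached to the Lagrangian subspace \( W(r)_s \).
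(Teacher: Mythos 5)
Your treatment of parts (1) and (2) is correct and is exactly the argument the paper intends: the paper prints no proof at all, only the remark that the lemma follows by ``combining'' \cref{lem:CYB(r)_phi_delta_r}, \cref{eq:twist_by_s_of_W} and the dual-basis algorithm, and that is precisely what you carry out. The computation of the dual basis of \(W(r)_s\) using the isotropy of \(\Delta\), and the identification \(\sum_{k,i}B(b^\mu,r_{k,i})\varepsilon_{k,i}=b^\mu\), are both right, so \(W(r)_s=W(r-s)\) and \(\delta_s=\delta+ds\) follow cleanly.

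Part (3), however, is not closed, and the step you defer to \cite{Drinfeld_quasi_hops_algebras} and \cite{alekseev_kosmann} is exactly where the argument fails. Your own expansion gives \(\varphi_s=-\textnormal{CYB}(r-s)=\varphi-\textnormal{CYB}(s)+\langle\!\langle r,s\rangle\!\rangle\), so the \(\textnormal{CYB}(s)\) term already enters with a \emph{minus} sign before the cross terms are touched; since \(\langle\!\langle r,s\rangle\!\rangle\) is bilinear in \(r\) and \(s\) and its \(r\)-dependence enters only through \(\delta\), it cannot supply the \(2\,\textnormal{CYB}(s)\) that would be needed to convert \(-\textnormal{CYB}(s)\) into the \(+\textnormal{CYB}(s)\) of the statement. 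Carrying out the grouping you describe, with \(T\coloneqq(\delta\ot 1)s\) (which is antisymmetric in legs \(1,2\)), one finds \([r^{12},s^{13}+s^{23}]=T\), \([s^{12}+s^{13},r^{23}]=-(1\ot\delta)s=-\sigma_{13}T\) and \([r^{13},s^{23}-s^{12}]=-\sigma_{23}T\) (the last two using the skew-symmetry of \(s\) and of \(\delta(a)\)), hence \(\langle\!\langle r,s\rangle\!\rangle=T-\sigma_{13}T-\sigma_{23}T=\tfrac12\textnormal{Alt}(T)\) and therefore \(\varphi_s=\varphi-\textnormal{CYB}(s)+\tfrac12\textnormal{Alt}((\delta\ot1)s)\) --- the stated rule with both correction terms negated. (Consistency check: specializing \(r=s\) gives \(\tfrac12\textnormal{Alt}((ds\ot1)s)=-2\,\textnormal{CYB}(s)\), which one verifies directly for \(s=e\ot f-f\ot e\) in \(\mathfrak{sl}_2\).) So the claim that the sign bookkeeping ``yields the stated rule'' is unsubstantiated and, with the conventions \(\delta(a)=-[a\ot1+1\ot a,r]\), \(\varphi=-\textnormal{CYB}(r)\) and \(W(r)_s=W(r-s)\) that you correctly adopt from the paper, it is false as written: either your reduction of the cross terms hides a sign error you have not located, or the statement itself carries the opposite signs and you should have flagged the discrepancy. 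Since this identity is the only substantive content of part (3), the proof is incomplete at its main point.
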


\begin{remark}
Since any quasi-\(r\)-matrix \(r \)
defines a topological quasi-Lie
bialgebra structure \( \delta(a) = [a \ot 1 + 1 \ot a, r]\)
on \( \fg[\![x]\!] \), the third
condition in \cref{def:top_quasi_lie_bialg}
is trivially satisfied. In other words,
\[ \textnormal{Alt}((\delta \ot 1 \ot 1)\textnormal{CYB}(r)) = 0\]
for any quasi-\(r\)-matrix \( r \).
\end{remark}

\Cref{lem:twists_subspaces} and \cref{lem:twists_subspaces2} state that, in order to obtain a description
of topological quasi-Lie bialgebra
structures on \( \fg[\![x]\!] \) 
up to twisting
it is enough to find a single Lagrangian subspace \( W_0 \), complementary to \( \fg[\![x]\!] \),
inside \( L(\infty) \) and 
each \( L(n, \alpha) \). The same is true for the associated quasi-\(r\)-matrices

The case \( L(\infty) \) is trivial,
because by definition 
\(\fg[\![x]\!]' = \bigoplus_{j \ge 0} \fg \ot a_j \subseteq L(\infty) \) 
is a Lagrangian subalgebra (see \cref{def:trivial_double}).
Similar to the Lie bialgebra case,
topological quasi-Lie bialgebras
corresponding to the Manin pair
\( (L(\infty), \fg[\![x]\!]) \) are called
\emph{degenerate}.

Let us now focus on
\emph{non-degenerate} topological
quasi-Lie bialgebra structures,
i.e.\ the ones corresponding to the
Manin pairs \( (L(n, \alpha), \Delta) \).
By \cref{prop:Manin_pairs}
for each Manin pair 
\( (L(n, \alpha), \Delta) \) 
there exists an appropriate coordinate
transformation that makes it into 
\( (L(n, \beta), \Delta) \),
where \( \beta_0 = \alpha_0 \)
and all other \( \beta_i = 0\).
This means, that to classify all
non-degenerate topological quasi-Lie bialgebras on \( \fg[\![x]\!] \),
up to coordinate transformations and twisting,
it is enough to construct a Lagrangian
subspace \( W_0 \) within each
\( L(n, \alpha_0) \coloneqq 
L(n, (\dots, 0, \alpha_0, 0, \dots, 0))\) complementary to \( \Delta \).
Equivalently, it is enough 
to find a quasi-\(r\)-matrix
of type \( (n, \alpha_0) \)
for any \( n \ge 0 \) and \( \alpha_0 \in F \).


\subsection{Lagrangian subspaces of \( L(n, \alpha_0) \)}%
\label{sec:Lagrangian_subspaces}
As before we let
\( \{ b_i \}_{i=1}^d \)
be an orthonormal basis for \( \fg \)
with respect to the Killing form \( \kf \).
The form \( B \) on \( L(n, \alpha_0) \)
has the following explicit form
\begin{equation}
  B(a \ot (f, [p]), b \ot (g, [q]))
  =
  \begin{cases}
  \kf(a,b)\left\{ \textnormal{coeff}_{n-1}(fg - pq)
  - \alpha_0 \textnormal{coeff}_{0}(fg - pq)\right\} & \, \text{ if } \, n \ge 2, \\
  \kf(a,b)\textnormal{coeff}_{n-1}(fg - pq) & \, \text{ if } \, n = 0, 1.
  \end{cases}
\end{equation}
We now present an explicit construction for a
Lagrangian subspace of \( L(n, \alpha_0) \)
complementary to \( \Delta \)
for arbitrary \( n \ge 0 \) and \( \alpha_0 \in F\). Using the twisting procedure from \cref{lem:twists_subspaces2}, this subspace can be twisted in order to obtain all other Lagrangian subspaces of \(L(n,\alpha_0)\) complementary to \(\Delta\).
\paragraph{n = 0:}
When \( n = 0 \), the subalgebra
\( W_0 \coloneqq x^{-1}\fg[\![x^{-1}]\!]
\subseteq \fg(\!(x)\!) \)
is known to be Lagrangian.


\paragraph{n = 1:} For \( n = 1 \) it is easy to see
that the subspace
\begin{equation}
 W_0 \coloneqq \textnormal{span}_F\{ b_i(1,-1), b_i(x^{-k}, 0) \mid k \ge 1, \, 1 \le i \le d \} \subset L(1, \alpha_0)
\end{equation}
is Lagrangian and complementary to the diagonal \( \Delta \).

\paragraph{n = 2k:}
For even \( n \ge 2 \) and arbitrary 
\( \alpha_0 \in F \) the subspace
\( W_0 \subset L(n, \alpha_0) \)
spanned by the elements
\begin{equation*}
\begin{aligned}
  b_i &\left\{ 
  (x^{(n-1)-m}, 0) - \alpha_0 (x^{2(n-1) - m},0)
  + \alpha_0^2 (x^{3(n-1) - m},0) - \alpha_0^3
  (x^{4(n-1) - m},0) + \dots
  \right\}, \ 0 \le m \le \frac{n}{2}-1, \\
  b_i &(
  0, -[x]^{(n-1)-\ell}
  ), \ \frac{n}{2} \le \ell < n-1, \\
  b_i &( 
  0, -1 + \frac{\alpha_0}{2}[x]^{n-1}
  ), \\
  b_i &(x^{-k}, 0), k \ge 1,
  \phantom{\frac{\alpha_0}{2}}
\end{aligned}
\end{equation*}
is Lagrangian and complementary
to the diagonal.
\paragraph{n = 2k + 1:}
Modifying slightly the basis for even case we obtain the following basis
for \( W_0 \subset L(n, \alpha_0) \)
with odd \( n \ge 3 \):
\begin{equation*}
\begin{aligned}
  b_i & \left\{ 
  (x^{(n-1)-m}, 0) - \alpha_0 (x^{2(n-1) - m},0)
  + \alpha_0^2 (x^{3(n-1) - m},0) - \alpha_0^3
  (x^{4(n-1) - m},0) + \dots
  \right\}, \ 0 \le m \le \frac{n-1}{2}-1, \\
  b_i & \left\{ 
  (x^{\frac{n-1}{2}}, -[x]^{\frac{n-1}{2}})
  - \alpha_0(x^{\frac{3(n-1)}{2}}, 0) + \alpha_0^2(x^{\frac{5(n-1)}{2}}, 0) - \alpha_0^3(x^{\frac{7(n-1)}{2}}, 0) + \dots
  \right\}, \\
  b_i & (
  0, -[x]^{(n-1)-\ell}
  ), \ \frac{n-1}{2}+1 \le \ell < n-1, \\
  b_i & ( 
  0, -1 + \frac{\alpha_0}{2}[x]^{n-1}
  ), \\
  b_i & (x^{-k}, 0), k \ge 1.\phantom{\frac{\alpha_0}{2}}
\end{aligned}
\end{equation*}

The subspaces above
were constructed by ''guessing''.
However, there is an abstract
procedure that produces Lagrangian
subspaces for arbitrary
\( n \) and \(\alpha\).
We present it here for completeness.

The easiest skew-symmetric \((n,s)\)-type series is given by
\begin{align*}
    r(x,y) &\coloneqq
    \frac{1}{2}\left(\frac{s(x)y^n\Omega}{x-y} + \frac{s(y)x^n\Omega}{x-y}\right) 
    = \frac{s(x)y^n\Omega}{x-y} + \frac{\Omega}{2}\left(\frac{s(y)x^n-s(x)y^n}{x-y}\right) \\
    &= 
    \frac{s(x)y^n\Omega}{x-y} - \frac{1}{2}\sum_{k,\ell = 0}^\infty \sum_{i,j = 1}^d B(sw_{k,i},sw_{\ell,j})b_i(x,[x])^k \otimes b_j(y,[y])^\ell,
\end{align*}
where we recall that
\begin{equation*}
\begin{aligned}
B(s w_{k,i},s w_{\ell,j}) =
\begin{cases}
- s_{k + \ell - n + 1} & \textnormal{ if } \, i = j, \, 0 \le k,\ell \le n-1 \, \textnormal{ and } \, k + \ell \ge n-1, \\
s_{k + \ell - n + 1} & \textnormal{ if } \, i = j \, \textnormal{ and }k \, ,\ell \ge n, \\
0 & \textnormal{ otherwise}.
\end{cases}
\end{aligned}
\end{equation*}
By \cref{cor:series_and_subspaces}
the subspace
\begin{equation*}
    \begin{split}
        W(r) &= \textnormal{span}_F\left\{sw_{k,i} - \frac{1}{2}\sum_{\ell = 0}^\infty B(sw_{\ell,i},sw_{k,i})b_i(x,[x])^\ell\, \bigg| \, k \ge 0, 1 \le d \le n\right\} 
        \\&= \textnormal{span}_F\left\{sw_{k,i} + \frac{1}{2}\left(\sum_{\ell = 0}^{n-1}s_{k+\ell - n 
       + 1}b_i(x,[x])^\ell - \sum_{\ell = n}^\infty s_{k+\ell - n 
        +1}b_i(x,[x])^\ell \right)\, \bigg| \, k \ge 0, 1 \le d \le n\right\} 
    \end{split}
\end{equation*}
is Lagrangian and complementary to the diagonal. Here we used the convention that \(s_k = 0\) for \(k < 0\). 
Calculating the basis
explicitly for some
particular \(s\)
requires some
effort and it may not look
as friendly as the ones given above.


\subsection{Quasi-\(r\)-matrices}%
\label{sec:corresponding_quasi_r_matr}
The goal of this section is to describe the
quasi-\(r\)-matrices corresponding to the Lagrangian subspaces described in the previous
section. The twisting procedure from \cref{lem:twists_subspaces2} then yields all other quasi-\(r\)-matrices.

The proof of \cref{thm:series_and_subspaces}
gives us an algorithm for constructing
a series of type
\( \left(n, s(x) \coloneqq 1/(x^n \alpha(x)) \right) \)
from a subspace \( W \subset L(n, \alpha) \)
complementary to the diagonal. More precisely,
the desired series is given by
\begin{equation}
    \sum_{k \ge 0} \sum_{i = 1}^d
    v_{k,i} \ot b_i (y^k, [y]^k),
\end{equation}
where 
\begin{equation*}
    W = \textnormal{span}_F \{ v_{k,i} \mid k \ge 0, \, 1 \le i \le d \}
    \, \text{ and } \, B(v_{k,i}, b_j (y^{\ell},[y]^\ell)) = \delta_{i,j}\delta_{k,\ell},
\end{equation*}
i.e.\
\( \{ v_{k,i} \} \) is a basis of \( V \)
dual to \( \{b_i (y^k,[y]^k) \} \).
Indeed, non-degeneracy of the form \( B \) then
implies that \( v_{k,i} \) has the desired form
\( v_{k,i}= sw_{k,i} + g_{k,i} \)
for some \( g_{k,i} \in (\fg \ot \fg)[\![x,y]\!] \). 

Applying this idea to \( W_0 \)'s constructed
in the preceding section we get the following series.

\paragraph{n = 0:}
The classical \(r\)-matrix (equivalently \( (0,1)\)-type series)
corresponding to
\( W_0 \coloneqq x^{-1}\fg[\![x^{-1}]\!]
\subseteq \fg(\!(x)\!) \)
is the Yang's matrix \( \Omega/(x-y)\).

\paragraph{n = 1:} The quasi-\(r\)-matrix
corresponding to \( \textnormal{span}_F\{ b_i(1,-1), b_i(x^{-k}, 0) \mid k \ge 1, \, 1 \le i \le d \} \subset L(1, \alpha_0) \)
is
\begin{equation*}
    \frac{y \Omega}{x-y} + \frac{1}{2} \sum_{i=1}^d b_i (1, -1) \ot b_i (1,1) \in L_2(1, 1)
    \, \text{ with the projection } \,
    \frac{y \Omega}{x-y} + \frac{1}{2} \Omega \in (\fg \ot \fg)(\!(x)\!)[\![y]\!].
\end{equation*}

\paragraph{n = 2k:}
For even \( n \ge 2 \) and arbitrary 
\( \alpha_0 \in F \) we have the following
quasi-\(r\)-matrix
\begin{equation*}
\begin{aligned}
    \frac{1}{1 + \alpha_0 x^{n-1}}
    \frac{y^n \Omega}{x-y} 
    &+
    \frac{\Omega}{1+\alpha_0 x^{n-1}}
    \sum_{0 \le m < \frac{n}{2}} x^{(n-1)-m}y^m \\
    &+ \frac{\alpha_0 \Omega}{(1+\alpha_0x^{n-1})(1+\alpha_0y^{n-1})}
    \left( 
    y^{2(n-1)} + \sum_{\frac{n}{2} \le \ell < n-1} x^{(n-1)-\ell}y^{(n-1)+\ell}
    - \frac{1}{2}x^{n-1}y^{n-1}
    \right).
\end{aligned}
\end{equation*}

\paragraph{n = 2k + 1:}
In the odd case \( n \ge 3 \) the series 
corresponding to \( W_0 \subset L(n, \alpha_0) \) is
\begin{equation*}
\begin{aligned}
    \frac{1}{1 + \alpha_0 x^{n-1}}
    \frac{y^n \Omega}{x-y} 
    &+
    \frac{\Omega}{1+\alpha_0 x^{n-1}}
    \left( x^{\frac{n-1}{2}}y^{\frac{n-1}{2}} +
    \sum_{0 \le m < \frac{n-1}{2}} x^{(n-1)-m}y^m \right) \\
    &+ \frac{\alpha_0 \Omega}{(1+\alpha_0x^{n-1})(1+\alpha_0y^{n-1})}
    \left( 
    y^{2(n-1)} + \sum_{\frac{n-1}{2} < \ell < n-1} x^{(n-1)-\ell}y^{(n-1)+\ell}
    - \frac{1}{2}x^{n-1}y^{n-1}
    \right).
\end{aligned}
\end{equation*}

\section{Lie algebra splittings of \(L(n,\alpha)\) and generalized \(r\)-matrices}%
\label{sec:algebra_splittings}
By \cref{cor:series_and_subspaces}
we have a bijection between
subalgebras of \( L(n, \alpha) \)
and series of type \( (n, 1/(x^n \alpha(x))) \) solving GCYBE.
Therefore, we can construct new
solutions to GCYBE by
finding subalgebras of \( L(n, \alpha) \) complementary to the diagonal.
However, 
as the following result shows, the
most interesting new solutions
should arise from
unbounded subalgebras of \( L(n,\alpha) \),
\( n > 2 \).

\begin{proposition}\label{prop:new_subalgs_are_bounded}
Let \(L(n,\alpha) = \Delta \dotplus W\) 
for some subalgebra \(W \subset L(n,\alpha)\).
Assume \( W \) is bounded, i.e.\ 
there is an integer \( N > 0 \)
such that
\begin{equation*}
    x^{-N}\fg[x^{-1}] \subseteq W_+ \subseteq x^N\fg[x^{-1}],
\end{equation*}
where \(W_+\) is the projection of 
\(W \subset L(n,\alpha) = \fg(\!(x)\!) \oplus \fg[x]/x^n\fg[x]\) 
on the first component \(\fg(\!(x)\!)\). 
Then there is an element \(\sigma \in  \textnormal{Aut}_{F[x]-\textnormal{LieAlg}}(\fg[x])\) such that
\[\{0\} \times [x^2]\fg[x]/x^n\fg[x] \subseteq (\sigma \times \sigma)W \subseteq x\fg[x^{-1}] \times \fg[x]/x^n\fg[x]\] 
and the image \(\widetilde{W}\) under the canonical projection \(L(n,\alpha) \to L(2,\alpha)\) is a subalgebra satisfying \(L(2,\alpha) = \Delta \dotplus \widetilde{W}\).

In the language of \( (n,s)\)-type
series: Let
\[r = \frac{s(x) y^n \Omega}{x-y} + g(x,y)\] 
be the generalized \(r\)-matrix
corresponding to a bounded \( W \subset L(n,\alpha), n \ge 2 \).
Then there is \(p(x,y) \in (\fg \otimes \fg)[x,y]\) of degree at most one in x and an element
\(\sigma \in  \textnormal{Aut}_{F[x]-\textnormal{LieAlg}}(\fg[x])\) such that
\begin{equation*}
    (\sigma(x) \ot \sigma(y))r(x,y) = y^{n-2} \Big(\underbrace{ \frac{s(x)y^2 \Omega}{x-y} + p(x,y)}_{r'(x,y)} \Big),
\end{equation*}
where \( r' \)
is a generalized \(r\)-matrix
in \( L_2(2,\alpha) \).
\end{proposition}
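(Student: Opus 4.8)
The plan is to exploit the hypothesis that $W$ is bounded in order to show that $W$ must contain the ``extra'' part $\{0\}\times [x^2]\fg[x]/x^n\fg[x]$ of $L(n,\alpha)$, and that modulo this part $W$ descends to a complementary subalgebra of $L(2,\alpha)$. First I would analyze the structure of a subalgebra $W$ with $\Delta\add W=L(n,\alpha)$ under the projection $\pi_+\colon L(n,\alpha)\to \fg(\!(x)\!)$. Since $\Delta$ is a subalgebra and $W$ is complementary, the projection $\pi_+(W)=W_+$ is a subalgebra of $\fg(\!(x)\!)$ complementary to $\fg[\![x]\!]$; the boundedness assumption says it is sandwiched between $x^{-N}\fg[x^{-1}]$ and $x^N\fg[x^{-1}]$. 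The key observation is then the following: because $W$ is a subalgebra, the kernel $W\cap(\{0\}\times\fg[x]/x^n\fg[x])$ is an ideal of $W$, and using $[\fg[\![x]\!],-]$-stability together with the complementarity $\Delta\add W=L(n,\alpha)$ one forces this kernel to be as large as possible. Concretely, for any $a\in\fg[\![x]\!]$ with $a(0)=0$ (so $a\in x\fg[\![x]\!]$) and any $w\in W$ projecting to some $w_+\in W_+$, the bracket $[(a,[a]),w]$ lands in $W$ after subtracting a diagonal correction; iterating with $a\in x\fg[\![x]\!]$ and using boundedness of $W_+$ from below (the inclusion $x^{-N}\fg[x^{-1}]\subseteq W_+$) shows that $W$ absorbs $\{0\}\times[x^2]\fg[x]/x^n\fg[x]$. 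This is the technical heart and I expect it to be the main obstacle: one has to track carefully how the bracket on $L(n,\alpha)$ mixes the two components and why the boundedness of $W_+$ both above and below is exactly what is needed.

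Next I would produce the automorphism $\sigma$. Having shown $\{0\}\times[x^2]\fg[x]/x^n\fg[x]\subseteq W$, the remaining freedom is in $W_+$, a bounded subalgebra of $\fg(\!(x)\!)$ complementary to $\fg[\![x]\!]$; by the classification of such subalgebras (cf.\ the discussion around $L(0,0)$, e.g.\ \cite{skrypnyk_infinite_dimensional_Lie_algebras,abedin2021geometrization}) any bounded one is conjugate, via an element of $\Aut_{F[x]\text{-}\mathrm{LieAlg}}(\fg[x])$ acting by $f(x)\mapsto \sigma(f)(x)$, to one contained in $x\fg[x^{-1}]$. Applying $\sigma\times\sigma$ (which is an automorphism of $L(n,\alpha)$ fixing $\Delta$, because $\sigma$ is $F[x]$-linear and hence respects the form up to the required normalization), we get $(\sigma\times\sigma)W\subseteq x\fg[x^{-1}]\times\fg[x]/x^n\fg[x]$ while still containing $\{0\}\times[x^2]\fg[x]/x^n\fg[x]$. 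Then the canonical projection $L(n,\alpha)\to L(2,\alpha)$ kills exactly $\{0\}\times[x^2]\fg[x]/x^n\fg[x]$, so $(\sigma\times\sigma)W$ maps onto a well-defined subspace $\widetilde W\subset L(2,\alpha)$; it is a subalgebra because the projection is a Lie algebra map, and $L(2,\alpha)=\Delta\add\widetilde W$ follows by a dimension/complementarity count once one checks the kernel of the projection lies inside $(\sigma\times\sigma)W$ (which is what we arranged) and that $\Delta$ maps to $\Delta$.

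Finally I would translate this into the language of $(n,s)$-type series via \cref{thm:series_and_subspaces}. The series $r$ attached to $W$ has singular part $\frac{s(x)y^n\Omega}{x-y}$ with $s=1/(x^n\alpha(x))$, and the inclusion $\{0\}\times[x^2]\fg[x]/x^n\fg[x]\subseteq W$ translates, via the explicit formula $r=\sum_{k,i}(sw_{k,i}+g_{k,i})\otimes b_i(y^k,[y]^k)$ from the proof of \cref{thm:series_and_subspaces}, into the statement that the coefficients of $b_i y^k$ for $k<n-2$ in the second tensor slot are forced, so that $r$ is divisible by $y^{n-2}$; the containment $(\sigma\times\sigma)W\subseteq x\fg[x^{-1}]\times\fg[x]/x^n\fg[x]$ says that after the substitution the polynomial tail $g$ becomes a tensor $p(x,y)$ of degree at most one in $x$. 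Writing $(\sigma(x)\otimes\sigma(y))r(x,y)=y^{n-2}r'(x,y)$ with $r'=\frac{s(x)y^2\Omega}{x-y}+p(x,y)$ and noting that $r'$ is precisely the $(2,s)$-type series attached to $\widetilde W$, hence a generalized $r$-matrix in $L_2(2,\alpha)$ since $\widetilde W$ is a subalgebra (by part 4 of \cref{thm:series_and_subspaces}), completes the proof. The only care needed here is bookkeeping: matching the divisibility by $y^{n-2}$ on the series side with the ideal $\{0\}\times[x^2]\fg[x]/x^n\fg[x]$ on the subspace side, and checking that $\sigma$ acts on $r$ coefficient-wise as stated.
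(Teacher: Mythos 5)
Your outline (show the ideal \(\{0\}\times[x^2]\fg[x]/x^n\fg[x]\) is absorbed, produce \(\sigma\), project to \(L(2,\alpha)\), translate into series) superficially matches the paper, but the two load-bearing steps are not actually carried out, and the mechanism you propose for the first one would fail. You produce \(\sigma\) by appealing to the classification of subalgebras of \(\fg(\!(x)\!)\) \emph{complementary} to \(\fg[\![x]\!]\); but \(W_+\) is not complementary to \(\fg[\![x]\!]\) in general --- one only has \(W_+ + \fg[\![x]\!] = \fg(\!(x)\!)\), and \(W_+\cap\fg[\![x]\!]\) is typically nonzero (already for the example \(W_0\) in Section 5 one has \(\fg\subseteq W_{0,+}\cap\fg[\![x]\!]\)). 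The input the paper actually uses is Stolin's theory of \emph{orders}: boundedness says precisely that \(W_+\) is an order of \(\fg[x,x^{-1}]\) with \(W_+ + \fg[x] = \fg[x,x^{-1}]\), and by \cite{Stolin_rational} every such order is conjugate, by some \(\sigma\in\Aut_{F[x]\textnormal{-LieAlg}}(\fg[x])\), into a maximal order \(\mathfrak{M}\subseteq x\fg[x^{-1}]\). This classification, not the classification of complements, is what furnishes \(\sigma\) and the containment in \(x\fg[x^{-1}]\).

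More seriously, your ``technical heart'' --- establishing \(\{0\}\times[x^2]\fg[x]/x^n\fg[x]\subseteq W\) \emph{before} conjugating, by bracketing with \(a\in x\fg[\![x]\!]\) and ``iterating'' --- is only a hope, and its proposed mechanism is suspect: \(L(n,\alpha)\) is a direct \emph{product} of Lie algebras, so the bracket does not mix the two components at all, and \([\Delta,W]\) lands in \(L=\Delta\add W\) with no reason for the ``diagonal correction'' you subtract to be available inside \(W\). The paper obtains the inclusion only \emph{after} conjugating, as a consequence of \(\sigma W_+\subseteq\mathfrak{M}\subseteq x\fg[x^{-1}]\) together with \((\sigma\times\sigma)W\add\Delta=L(n,\alpha)\): once the first components are confined to \(x\fg[x^{-1}]\), the diagonal part \(f_p\) in the decomposition \((0,[p])=(f_p,[f_p])+w\) is forced into the \(2\dim\fg\)-dimensional space \(\fg+\fg x\) and can then be eliminated. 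Before conjugation you only know \(W_+\subseteq x^N\fg[x^{-1}]\), so \(W_+\cap\fg[\![x]\!]\) may have dimension up to \((N+1)\dim\fg\) and no such count closes. Note also that a mere complement \(V\) with \(V_+\subseteq x\fg[x^{-1}]\) need \emph{not} contain \(\{0\}\times[x^2]\fg[x]/x^n\fg[x]\) (perturb the generators \(b_i(0,[x]^{n-1})\) of a standard complement by the diagonal elements \(b_i(x,[x])\)), so the subalgebra/order structure must genuinely enter somewhere; your sketch does not identify where. The remaining steps (projection to \(L(2,\alpha)\) and the reformulation via \cref{thm:series_and_subspaces}) are fine and agree with the paper.
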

\begin{proof}
The condition 
\(x^{-N}\fg[x^{-1}] \subseteq W_+ \subseteq x^N\fg[x^{-1}]\) 
means exactly that \(W_+\) is an order.
Moreover, since \( W \) is complementary
to the diagonal, we have
\( W_+ + \fg[x] = \fg[x,x^{-1}] \).
It was shown in \cite{Stolin_rational} that
such orders, up to the action of
some
\(\sigma \in \textnormal{Aut}_{F[x]-\textnormal{LieAlg}}(\fg[x]) \),  are
contained in a maximal order
\(\mathfrak{M}\) associated to the
so called fundamental simplex
\( \Delta_{\textnormal{st}}\). These maximal orders are explicitly described in \cite{Stolin_rational} and satisfy \(\mathfrak{M} \subseteq x\fg[x^{-1}]\).
Therefore, we have
\(\sigma W_+ \subseteq \mathfrak{M} \subseteq x\fg[x^{-1}]\).
Moreover, we have the identity
\begin{equation*}
    (\sigma \times \sigma)W \add \Delta = L(n,\alpha),
\end{equation*}
implying the inclusion
\(\{0\} \times [x^2]\fg[x]/x^n\fg[x] \subseteq (\sigma \times \sigma)W\). The remaining parts follow straightforward from the construction \cref{thm:series_and_subspaces}.
\end{proof}

Unfortunately, we have not found
a new example of an unbounded
subalgebra of \( L(n,\alpha) \).
However, we present an infinite
family of bounded subalgebras.
We believe these examples are still
interesting because their orthogonal
complements, which are important
in the view of
Adler-Kostant-Symes scheme,
are unbounded if \(\alpha \neq 0\).

Consider the subspaces of \( L(n,\alpha_0) \), \( n > 0 \):
\begin{equation*}
    \begin{aligned}
    W_0 &= \textnormal{span}_F \{ b_i(x^{-k},0), b_i(1,0), b_i(0, -[x]^\ell) \mid k \ge 1, \, 1 \le \ell \le n-1 \}, \\
    W_1 &= \textnormal{span}_F \{ b_i(x^{-k},0), b_i(0,-1), b_i(0, -[x]^\ell) \mid k \ge 1, \, 1 \le \ell \le n-1 \}.
    \end{aligned}
\end{equation*}
These are clearly subalgebras.
The corresponding generalizerd
\(r\)-matrices are
\begin{equation*}
\begin{aligned}
    r_0 &= \frac{1}{1+\alpha_0 x^{n-1}} \frac{y^n\Omega}{x-y} + \frac{y^{n-1} \Omega}{(1 + \alpha_0 x^{n-1})(1 + \alpha_0 y^{n-1})}  \\
    & \ \ \  +\frac{\alpha_0 \Omega}{(1 + \alpha_0 x^{n-1})(1 + \alpha_0 y^{n-1})}\left( y^{2(n-1)} + \sum_{ 0 \le \ell < n-1} x^{(n-1)-\ell} y^{(n-1)+\ell} \right) \\
    &= \frac{y^{n-1}}{1+\alpha_0 y^{n-1}} \left(\frac{y\Omega}{x-y} + \Omega\right),\\
    r_1 &= \frac{1}{1+\alpha_0 x^{n-1}} \frac{y^n\Omega}{x-y} 
    +\frac{\alpha_0 \Omega}{(1 + \alpha_0 x^{n-1})(1 + \alpha_0 y^{n-1})}\left( y^{2(n-1)} + \sum_{ 0 < \ell < n-1} x^{(n-1)-\ell} y^{(n-1)+\ell} \right) \\ & = \frac{1}{1+\alpha_0 y^{n-1}}\frac{y^n\Omega}{x-y}. 
\end{aligned}
\end{equation*}

By considering 
decompositions
\( \fg = \fs_1 \add \fs_2 \) of
\( \fg \)
into direct sums of subalgebras
we can get an infinite
family of generalized \(r\)-matrices
''in between'' \(r_0\) and \(r_1\).
More precisely,
let \( \{s_{1,i}\}_{i=1}^{d_1}  \)
and \( \{s_{2,j}\}_{j=1}^{d_2}  \)
be bases for 
\( \fs_1 \) and \( \fs_2 \)
respectively.
Such a decomposition leads
to another subalgera of \( L(n, \alpha_0) \):
\begin{equation*}
\begin{aligned}
    W_{01} \coloneqq \textnormal{span}_F \Big\{ b_i(x^{-k},0), s_{1,m}(1,0),
    s_{2,j}(0,1), b_i(0, -[x]^\ell) \mid k \ge 1, \, &1 \le \ell \le n-1, \, 1 \le i \le d, \\
    &1 \le m \le d_1, \, 1 \le j \le d_2 \Big\}.
\end{aligned}
\end{equation*}
Rewrite the elements \( b_i \)
in terms of \( s_{1,m} \)
and \( s_{2,j} \):
\begin{equation*}
    b_i = \sum_{m=1}^{d_1} \lambda_{1,m}^i s_{1,m}
    + \sum_{j=1}^{d_2}
    \lambda_{2,j}^i s_{2,j},
\end{equation*}
where \( \lambda_{1,m}^i, \lambda_{2,j}^i \in F \).
Finding a basis in \( W_{12} \)
dual to \( \{ b_i(y^m, [y]^m) \} \subset \Delta \)
and then projecting the generating
series for \( W_{01} \) onto
the first component we obtain
the following generalized 
\( r \)-matrix
\begin{equation}%
\label{eq:generalized_nonzero_alpha}
\begin{aligned}
    r_{01} &= \frac{1}{1+\alpha_0 x^{n-1}} \frac{y^n\Omega}{x-y} 
    +\frac{\alpha_0 \Omega}{(1 + \alpha_0 x^{n-1})(1 + \alpha_0 y^{n-1})}\left( y^{2(n-1)} + \sum_{ 0 < \ell < n-1} x^{(n-1)-\ell} y^{(n-1)+\ell} \right) \\
    &+
    \frac{y^{n-1}}{1 + \alpha_0 y^{n-1}}
    \sum_{i=1}^d \sum_{m=1}^{d_1}
    \lambda^i_{1,m} s_{1,m} \ot b_i.
    \\&= \frac{y^{n-1}}{1+\alpha_0y^{n-1}}\left(\frac{y\Omega}{x-y} + \sum_{i=1}^d \sum_{m=1}^{d_1}
    \lambda^i_{1,m} s_{1,m} \ot b_i\right)
\end{aligned}
\end{equation}
Clearly \(r_{01}\) coincides with
\( r_0 \) when \( \fs_1 = \fg \)
and \( r_1 \) if \( \fs_2 = \fg \).
The corresponding
orhogonal complements are
\begin{equation}
\begin{aligned}
    W_0^{\perp} &= W(\overline{r_0}) = 
    \textnormal{span}_{F}
    \left\{
    b_i\left(0,[x]^{n-1}\right),
    b_i\left(\frac{x^{-k(n-1)-m}}{1+\alpha_0 x^{n-1}}, 0\right)
     \mid k \ge -1, 0 < m < n-1 \right\}, \\ ~\\
    W_1^{\perp} &= W(\overline{r_1}) = 
    \textnormal{span}_{F}
    \left\{
    b_i\left(\frac{x^{-k(n-1)-m}}{1+\alpha_0 x^{n-1}}, 0\right)
     \mid k \ge -1, 0 \le m < n-1 \right\}, \\~\\
     W_{01}^{\perp} &= W(\overline{r_{01}}) = 
     \mathfrak{s}_1^\perp \left(\frac{x^{n-1}}{1+\alpha_0 x^{n-1}},0\right)
     \add 
     \mathfrak{s}_2^\perp(0, [x]^{n-1}) \\
     & \phantom{W(\overline{r_1}) = W(\overline{r_{01}}}  \add
    \textnormal{span}_{F}
    \left\{
    b_i\left(\frac{x^{-k(n-1)-m}}{1+\alpha_0 x^{n-1}}, 0\right)
     \mid k \ge -1, 0 < m < n-1 \right\},
\end{aligned}
\end{equation}
which are unbounded because of the factor \( 1/ (1+\alpha_0x^{n-1}) \).

Note that a series of
type \( (n,s) \)
defines a subspace inside
\( L(n,\alpha ) \) for any
\( \alpha \), because the 
subalgebra property is not
affected by the form.
With the previous examples in mind we can prove the following statement.

\begin{lemma}
Let \(B_0 \) and \( B_\alpha \) be the bilinear forms
on \( L(n, 0) \) and \( L(n, \alpha) \) respectively.
For a series \(r\) of type \( (n,s) \) we have
\begin{equation}
    W\left(r\right)^{\perp_{B_\alpha}} = \frac{1}{x^n \alpha(x)} W(r)^{\perp_{B_0}} \subset L(n, \alpha).
\end{equation}
\end{lemma}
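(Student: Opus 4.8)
Here I would prove that the form on $L(n,\alpha)$ is obtained from the form on $L(n,0)$ simply by precomposing one argument with multiplication by the unit $u(x):=x^{n}\alpha(x)\in F[\![x]\!]^{\times}$, and then transport the orthogonality relation through this twist. The point to exploit is that $L(n,\alpha)$ and $L(n,0)$ have the \emph{same} underlying Lie algebra $\fg\ot A$ with $A=F(\!(x)\!)\oplus F[x]/(x^{n})$ — the bracket does not see $\alpha$ — and differ only through the functional $\tOp$, since the series attached to $\alpha$ factors as $\alpha(x)=x^{-n}u(x)$ while the one attached to $0$ is $x^{-n}$. Accordingly, set $s:=1/(x^{n}\alpha(x))=u(x)^{-1}$, view both $u$ and $s$ as (mutually inverse) units of the algebra $A$ via $u\mapsto(u(x),[u(x)])$ and likewise for $s$, and write $m_{u}\colon L(n,\alpha)\to L(n,\alpha)$ for the $F$-linear bijection given by multiplication by $u$, so that $m_{u}^{-1}=m_{s}$.

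The one substantive step is the identity $\tOp_{\alpha}(\xi)=\tOp_{0}(u\xi)$ for every $\xi\in A$. Writing $\xi=(h,[q])$ and choosing a polynomial lift $\widetilde q$ of $[q]$, one has $\tOp_{\alpha}(\xi)=\textnormal{res}_{0}\{x^{-n}u(h-\widetilde q)\}$, whereas $u\xi=(uh,[uq])$ and $u\widetilde q$ is a power-series lift of $[uq]$ that differs from any polynomial lift by a multiple of $x^{n}$, which contributes nothing to $\textnormal{res}_{0}\{x^{-n}\,\cdot\,\}$; hence $\tOp_{0}(u\xi)=\textnormal{res}_{0}\{x^{-n}(uh-u\widetilde q)\}=\textnormal{res}_{0}\{x^{-n}u(h-\widetilde q)\}=\tOp_{\alpha}(\xi)$. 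Because $m_{u}$ is $A$-linear, i.e.\ $m_{u}(\xi\eta)=(u\xi)\eta$, combining this with the defining formula $B(a\ot\xi,b\ot\eta)=\kf(a,b)\,\tOp(\xi\eta)$ — used once with $\tOp=\tOp_{0}$ and once with $\tOp=\tOp_{\alpha}$ — together with $F$-bilinearity yields
\[
B_{\alpha}(v,w)=B_{0}(m_{u}v,w)\qquad\text{for all }v,w\in L(n,\alpha).
\]

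Granting this, the conclusion is purely formal: for $v\in L(n,\alpha)$ we have $v\in W(r)^{\perp_{B_{\alpha}}}$ iff $B_{0}(m_{u}v,w)=0$ for all $w\in W(r)$, i.e.\ iff $m_{u}v\in W(r)^{\perp_{B_{0}}}$, i.e.\ iff $v\in m_{u}^{-1}\bigl(W(r)^{\perp_{B_{0}}}\bigr)=m_{s}\bigl(W(r)^{\perp_{B_{0}}}\bigr)=s\cdot W(r)^{\perp_{B_{0}}}$, all spaces being regarded inside the common underlying space of $L(n,\alpha)$ and $L(n,0)$. This is exactly $W(r)^{\perp_{B_{\alpha}}}=\tfrac{1}{x^{n}\alpha(x)}W(r)^{\perp_{B_{0}}}$, and it is insensitive to whether $W(r)$ is complementary to $\Delta$. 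The only place requiring care — and it is mild — is the first step: verifying that multiplication by $u$ intertwines the two functionals in spite of the truncation defining the $F[x]/(x^{n})$-summand of $A$; once phrased via $\textnormal{res}_{0}\{x^{-n}\,\cdot\,\}$ this is transparent, and everything else uses only bilinearity, symmetry and non-degeneracy of the form.
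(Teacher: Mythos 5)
Your proof is correct, and its engine is the same as the paper's: the unit $u(x)=x^{n}\alpha(x)$ of $A(n,\alpha)$ intertwines the two forms, $B_{\alpha}(v,w)=B_{0}(uv,w)$, which converts one orthogonal complement into the other after multiplying by $u^{-1}=1/(x^{n}\alpha(x))$. The difference lies in where the identity is established and how much is then needed to finish. The paper only verifies the relation (written as $B_{\alpha}(v,\tfrac{1}{x^{n}\alpha(x)}w)=B_{0}(v,w)$, i.e.\ the same identity with the unit moved to the other side) on the specific spanning vectors $sw_{k,i}+g_{k,i}$ of $W(r)$ and $sw_{\ell,j}+\overline{g}_{\ell,j}$ of $W(\overline{r})$, and appeals to \cref{thm:series_and_subspaces} to get the final vanishing; this tacitly uses $s=1/(x^{n}\alpha(x))$ and the complementarity $W(r)\add\Delta=L(n,\alpha)$, and, taken literally, a check on spanning sets only yields inclusions between the two orthogonal complements that still have to be upgraded to an equality. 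You instead prove the identity at the level of the functional, $\tOp_{\alpha}(\xi)=\tOp_{0}(u\xi)$ for every $\xi\in A(n,\alpha)$ --- correctly isolating the one delicate point, namely that $u\widetilde{q}$ is a power-series rather than polynomial lift of $[uq]$, which is harmless because $\tOp$ is insensitive to changing a representative by a multiple of $x^{n}$ --- and then the set equality falls out as a chain of equivalences valid for an \emph{arbitrary} subspace, with no appeal to \cref{thm:series_and_subspaces}, to the complementarity of $W(r)$, or to the specific value of $s$. Your route is therefore more self-contained and slightly more general, and it delivers the stated equality directly rather than via inclusions; the paper's route is shorter on the page because it recycles the dual-basis computations already performed in the proof of \cref{thm:series_and_subspaces}.
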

\begin{proof}
Set \( u(x) \coloneqq 1/(x^n \alpha(x)) \). Write
\[ r = \sum_{k \ge 0} \sum_{i = 1}^d (s w_{k,i} + g_{k,i}) \ot b_i(y^k,[y]^k)  \ \text{ and } \  \overline{r} = \sum_{k \ge 0} \sum_{i = 1}^d (sw_{k,i} + \overline{g_{k,i}}) \ot b_i(y^k,[y]^k).  \]
Then by \cref{thm:series_and_subspaces} and definition \cref{eq:form_L_n_alpha} 
\(B_{\alpha}(sw_{k,i} + g_{k,i}, u( sw_{\ell,j} + \overline{g_{\ell,j}}))
= B_0 (sw_{k,i} + g_{k,i}, sw_{\ell,j} + \overline{g_{\ell,j}}) = 0\)
\end{proof}

\newpage
\printbibliography\end{document}